\def\bbb {\mathbf{b}}
\def\bC {\mathbf{C}}
\def\bN {\mathbf{N}}
\def\bR {\mathbf{R}}
\def\bS {\mathbf{S}}
\def\bZ {\mathbf{Z}}
\def\cL {\mathcal{L}}
\def\cM {\mathcal{M}}
\def\cN {\mathcal{N}}
\def\cS {\mathcal{S}}
\def\scrL{\mathscr{L}}
\def\a {{\alpha}}
\def\g {{\gamma}}
\def\de {{\delta}}
\def\eps {{\epsilon}}
\def\th {{\theta}}
\def\ka {{\kappa}}
\def\l {{\lambda}}
\def\L {{\Lambda}}
\def\si {{\sigma}}
\def\vphi {{\varphi}}
\def\d {{\partial}}
\def\grad {{\nabla}}
\def\Dlt {{\Delta}}
\def\rstr {{\big |}}
\def\indc {{\bf 1}}
\newcommand{\Diag}{\operatorname{diag}}
\newcommand{\Dim}{\operatorname{dim}}
\newcommand{\Sign}{\operatorname{sign}}
\newcommand{\Supp}{\operatorname{supp}}
\newcommand{\Det}{\operatorname{det}}
\newcommand{\Tr}{\operatorname{trace}}
\newcommand{\Id}{\operatorname{Id}}
\newcommand{\Ker}{\operatorname{Ker}}
\newcommand{\ba}{\begin{aligned}}
\newcommand{\ea}{\end{aligned}}
\newcommand{\be}{\begin{equation}}
\newcommand{\ee}{\end{equation}}
\newcommand{\lb}{\label}
\newtheorem{Thm}{Theorem}[section]
\newtheorem{Prop}[Thm]{Proposition}
\newtheorem{Lem}[Thm]{Lemma}
\begin{document}

\title[Classical Limit of Schr\"odinger]{On the Classical Limit \\ of the Schr\"odinger Equation}

\author[C. Bardos]{Claude Bardos}
\address[C.B.]{Universit\'e Paris-Diderot, Laboratoire J.-L. Lions, BP187, 4 place Jussieu, 75252 Paris Cedex 05 France}
\email{claude.bardos@gmail.com}

\author[F. Golse]{Fran\c cois Golse}
\address[F.G.]{Ecole Polytechnique, Centre de Math\'ematiques L. Schwartz, 91128 Palaiseau Cedex, France}
\email{francois.golse@math.polytechnique.fr}

\author[P. Markowich]{Peter Markowich}
\address[P.M.]{King Abdullah University of Science and Technology (KAUST), MCSE Division, Thuwal 23955-6900, Saudi Arabia}
\email{Peter.Markowich@kaust.edu.sa}

\author[T. Paul]{Thierry Paul}
\address[T.P.]{Ecole Polytechnique, Centre de Math\'ematiques L. Schwartz, 91128 Palaiseau Cedex, France}
\email{thierry.paul@math.polytechnique.fr}

\begin{abstract}
This paper provides an elementary proof of the classical limit of the Schr\"odinger equation with WKB type initial data and over arbitrary long finite time intervals. We use only the stationary phase method and the Laptev-Sigal 
simple and elegant construction of a parametrix for Schr\"odinger type equations [A. Laptev, I. Sigal, Review of Math. Phys. \textbf{12} (2000), 749--766]. We also explain in detail how the phase shifts across caustics obtained 
when using the Laptev-Sigal parametrix are related to the Maslov index.
\end{abstract}

\keywords{Schr\"odinger equation, Classical limit, WKB expansion, Caustic, Fourier integral operators, Lagrangian manifold, Maslov index.}

\subjclass{Primary: 35Q41, 81Q20; Secondary: 35S30, 53D12.}

\maketitle


\section{The classical scaling}


Consider the evolution Schr\"odinger equation
$$
i\hbar\d_t\psi=-\tfrac{\hbar^2}{2m}\Dlt_x\psi+V(x)\psi
$$
for the wave function $\psi$ of a point particle of mass $m$ subject to the action of an external potential $V\equiv V(x)\in\bR$.
 
Choosing ``appropriate'' units of time $T$ and length $L$, we recast the Schr\"odinger equation in terms of dimensionless variables $\hat t:=t/T$ and $\hat x:=x/L$. We define a rescaled wave function $\hat\psi$ and a rescaled, 
dimensionless potential $\hat V$ by the formulas 
$$
\hat\psi(\hat t,\hat x):=\psi(t,x)\quad\hbox{ and }\hat V(\hat x):=\frac{T^2}{mL^2}V(x)\,.
$$
In these dimensionless variables, the Schr\"odinger equation takes the form
$$
i\d_{\hat t}\hat\psi=-\frac{\hbar T}{2mL^2}\Dlt_{\hat x}\hat\psi+\frac{mL^2}{\hbar T}\hat V(\hat x)\hat\psi\,.
$$
The dimensionless number $2\pi\hbar T/mL^2$ is the ratio of the Planck constant to $mL^2/T$, that is (twice) the action of a classical particle of mass $m$ moving at speed $L/T$ on a distance $L$. If the scales of time $T$ and length 
$L$ have been chosen conveniently, $L/T$ is the typical order of magnitude of the particle speed, and $L$ is the typical length scale on which the particle motion is observed. The classical limit of quantum mechanics is defined by the 
scaling assumption $2\pi\hbar\ll mL^2/T$ --- i.e. the typical action of the particle considered is large compared to $\hbar$. Equivalently, $mL/T$ is the order of magnitude of the particle momentum, so that $2\pi\hbar T/mL$ is its de Broglie 
wavelength; the scaling assumption $2\pi\hbar T/mL\ll L$ means that the de Broglie wavelength of the particle under consideration is small compared to the observation length scale $L$.

Introducing the small, dimensionless parameter $\eps=\hbar T/mL^2$ and dropping hats in the dimensionless variables as well as on the rescaled wave function and dimensionless potential, we arrive at the following formulation for the 
Schr\"odinger equation in dimensionless variables
\be\lb{ScalSchro}
i\eps\d_t\psi=-\tfrac12\eps^2\Dlt_x\psi+V(x)\psi\,.
\ee

The WKB ansatz postulates that, at time $t=0$, the wave function is of the form
$$
\psi(t,x)=a^{in}(x)e^{iS^{in}(x)/\hbar}\,,\quad x\in\bR^N\,.
$$
Consistently with the scaling argument above, we set 
$$
\hat a^{in}(\hat x):=a^{in}(x)\hbox{ and }\hat S^{in}(\hat x):=TS^{in}(x)/mL^2
$$
 --- since $S^{in}$ has the dimension of an action --- so that
\be\lb{ScalIn}
\hat\psi(0,\hat x)=\hat a^{in}(\hat x)e^{i\hat S^{in}(\hat x)/\eps}\,.
\ee

Dropping hats in the initial data as well as in the Schr\"odinger equation, one arrives at the following Cauchy problem for the Schr\"odinger equation in dimensionless variables:
\be\lb{ClassSchrod}
\left\{
\ba
{}&i\eps\d_t\psi_\eps=-\tfrac12\eps^2\Dlt_x\psi_\eps+V(x)\psi_\eps\,,\quad x\in\bR^N\,,\,\,t\in\bR\,,
\\
&\psi_\eps(0,x)=a^{in}(x)e^{iS^{in}(x)/\eps}\,.
\ea
\right.
\ee

The problem of the classical limit of the Schr\"odinger equation is to describe the wave function $\psi_\eps$ in the limit as $\eps\to 0^+$.

\smallskip
The main result obtained in this paper (Theorem \ref{T-WKBV}) is of course not new --- it is stated without proof as Theorem 5.1 in \cite{ArnoldMaslov}; see also Example 6.1 described on pp. 141-143 in \cite{Masl}, especially formula 
(1.21) there. 

Yet our purpose in the present work is to provide a short and self-contained proof of this result, based on Laptev-Sigal lucid construction of a parametrix for the Schr\"odinger equation (\ref{ScalSchro}) in \cite{LapSig}. In this way, we 
avoid using either Maslov's formalism \cite{Masl,MaslFed} of the canonical operator, or the semiclassical analogue of H\"ormander's global theory \cite{Hor4} of Lagrangian distributions or Fourier integral operators. The Laptev-Sigal
construction, which is is remarkably short (pp. 753--759 in \cite{LapSig}), uses instead Fourier integrals with a complex phase whose imaginary part is quadratic. 


\section{The classical dynamics}


Before discussing the classical limit of the Schr\"odinger equation, we need to recall a few preliminary results on either the quantum dynamics defined by (\ref{ClassSchrod}) or the classical dynamics in phase space associated 
to the Schr\"odinger equation (\ref{ClassSchrod}).

\smallskip
Assume that $V\in C^\infty(\bR^N)$ satisfies
\be\lb{Cinftyb}
\d^\a V\in L^\infty(\bR^N)\quad\hbox{ for each multi-index }\a\in\bN^N\hbox{ such that }|\a|>0
\ee
and 
\be\lb{SublinV}
\frac{V(x)}{|x|}\to 0\quad\hbox{ as }|x|\to+\infty\,.
\ee
Then the Hamiltonian 
$$
H(x,\xi):=\tfrac12|\xi|^2+V(x)
$$ 
generates a global flow 
$$
\bR^N\times\bR^N\ni(x,\xi)\mapsto\Phi_t(x,\xi):=(X_t(x,\xi),\Xi_t(x,\xi))\in\bR^N\times\bR^N
$$
that is of class $C^1$. This Hamiltonian flow is the classical dynamics corresponding to the quantum dynamics defined by the Schr\"odinger equation (\ref{ClassSchrod}). 

This classical dynamics satisfies the following properties: for each $\eta>0$, there exists $C_\eta>0$ such that
$$
\sup_{|t|\le T}|X_t(x,\xi)-x|\le C_\eta(1+|\xi|)+\eta|x|
$$
for each $x,\xi\in\bR^N$, and
$$
|D\Phi_t(x,\xi)-\Id_{\bR^N\times\bR^N}|\le e^{\ka|t|}-1
$$
for all $t\in\bR$. (See Lemma 4.1 in \cite{BGMP}.)

Assume further that
\be\lb{SA1}
\sup_{x\in\bR^N}\int_{\bR^N}\Gamma_\eta(x-y)V^-(y)dy\to 0\hbox{ as }\eta\to 0\quad\hbox{ if }N\ge 2
\ee
with 
$$
\Gamma_\eta(z)=\left\{\ba{}&\indc_{[0,\eta]}(|z|)|z|^{2-N}&&\hbox{ if }N\ge 3\,,\\&\indc_{[0,\eta]}(|z|)\ln(1/|z|)&&\hbox{ if }N=2\,,\ea\right.
$$
while
\be\lb{SA2}
\sup_{x\in\bR^N}\int_{x-1}^{x+1}V^-(y)dy<\infty\quad\hbox{ if }N=1\,.
\ee
Under assumptions (\ref{SA1})-(\ref{SA2}), the operator $-\tfrac12\eps^2\Dlt_x+V$ has a self-adjoint extension on $L^2(\bR^N)$ that is bounded from below. (See \cite{LionsPaul} on p. 567.)

Next we discuss the properties of the initial phase function $S^{in}$. It will be convenient to assume that $S^{in}$ is of class $C^2$ at least on $\bR^N$ and satisfies the following growth condition at infinity:
\be\lb{Sublin}
\frac{|\grad S^{in}(y)|}{|y|}\to 0\qquad\hbox{ as }|y|\to 0\,.
\ee
Consider the map
\be\lb{DefF}
F_t:\,\bR^N\ni y\mapsto F_t(y)=X_t(y,\grad S^{in}(y))\in\bR^N\,,
\ee
together with the absolute value of its Jacobian determinant
\be\lb{DefJ}
J_t(y)=|\Det(DF_t(y))|\,.
\ee

We also introduce the set
\be\lb{Caustic}
\ba
{}&C\,:=\{(t,x)\in\bR\times\bR^N\,|\,F_t^{-1}(\{x\})\cap J_t^{-1}(\{0\})\not=\varnothing\}\,,
\\
&C_t:=\{x\in\bR^N\,|\,(t,x)\in C\}\,,\quad t\in\bR\,.
\ea
\ee
For lack of a better terminology and by analogy with geometric optics, $C$ will be referred to as the ``caustic'' set, and $C_t$ as the ``caustic fiber''.

\begin{Prop}\lb{P-Smooth}
Assume that (\ref{Sublin}) holds, and that
$$
|\grad^2S^{in}(y)|=O(1)\quad\hbox{ as }|y|\to+\infty\,.
$$

\smallskip
\noindent
(a) For each $t\in\bR$, the map $F_t$ is proper and onto. More precisely, for each $T>0$
\be\lb{MajF}
\sup_{|t|\le T}\frac{|F_t(y)-y|}{|y|}\to 0\quad\hbox{ as }|y|\to\infty\,.
\ee

\noindent
(b) The caustic set $C$ is closed in $\bR\times\bR^N$, and $\scrL^N(C_t)=0$ for each $t\in\bR$.

\noindent
(c) For each $(t,x)\in\bR\times\bR^N\setminus C$, the set $F_t^{-1}(\{x\})$ is finite, henceforth denoted by
$$
\{y_j(t,x)\,,\,\,j=1,\ldots,\cN(t,x)\}\,.
$$
The integer $\cN$ is a constant function of $(t,x)$ in each connected component of $\bR\times\bR^N\setminus C$ and, for each $j\ge 1$, the map $y_j$ is of class $C^1$ on each connected component of $\bR\times\bR^N\setminus C$ 
where $\cN\ge j$.

\noindent
(d) There exists $a<0<b$ such that $C\cap((a,b)\times\bR^N)=\varnothing$ and $\cN=1$ on $(a,b)\times\bR^N$.

\noindent
(e) For each $(t,x)\in\bR\times\bR^N\setminus C$, the integer $\cN(t,x)$ is odd.
\end{Prop}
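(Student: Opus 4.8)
The plan is to derive the five assertions in turn from the two quantitative flow bounds recalled above together with the hypotheses on $S^{in}$. For (a), substitute $x=y$ and $\xi=\grad S^{in}(y)$ into the displacement bound to get $|F_t(y)-y|\le C_\eta(1+|\grad S^{in}(y)|)+\eta|y|$ for all $|t|\le T$; dividing by $|y|$, letting $|y|\to\infty$ (with $\eta$, hence $C_\eta$, fixed, the first term tends to $0$ by (\ref{Sublin})) and then $\eta\to0^+$ yields (\ref{MajF}). In particular $|F_t(y)|\ge|y|(1-o(1))$ uniformly for $|t|\le T$, so $F_t$ --- which is continuous, indeed $C^1$ since $X_t$ is $C^1$ and $S^{in}\in C^2$ --- is proper. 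Surjectivity then follows by a degree argument: the family $F_s$ for $s$ between $0$ and $t$ is a proper homotopy from $\Id_{\bR^N}$ to $F_t$, the uniform properness being exactly the content of (\ref{MajF}), so $\deg F_t=\deg\Id=1$ and $F_t$ is onto.

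For (b), $C_t=F_t(\{J_t=0\})$ is the set of critical values of the $C^1$ map $F_t$, so $\scrL^N(C_t)=0$ by Sard's theorem (in equal dimensions $C^1$ regularity suffices). To see $C$ is closed, take $(t_n,x_n)\to(t,x)$ in $C$ with $F_{t_n}(y_n)=x_n$ and $J_{t_n}(y_n)=0$; the $y_n$ are bounded by the uniform properness from (a), a subsequence converges to some $y$, and joint continuity of $(s,z)\mapsto\Phi_s(z)$ and of its differential (both smooth here since $V\in C^\infty$) give $F_t(y)=x$ and $J_t(y)=0$, i.e. $(t,x)\in C$. For (c), fix $(t,x)\notin C$: each $y$ in the fibre $F_t^{-1}(\{x\})$ has $DF_t(y)$ invertible, hence is isolated by the inverse function theorem, and the fibre is compact by properness, hence finite. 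For local constancy of $\cN$ and $C^1$ dependence of the branches, consider the suspended map $\tilde F\colon(s,z)\mapsto(s,F_s(z))$, whose differential is invertible precisely over $(\bR\times\bR^N)\setminus C$; restricted there it is a proper local $C^1$-diffeomorphism onto $(\bR\times\bR^N)\setminus C$, hence a covering map, so the sheet number $\cN$ is constant on connected components and local sections furnish the $C^1$ maps $y_j$.

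For (d), write $DF_t(y)=D_xX_t+(D_\xi X_t)\grad^2S^{in}(y)$ (both $D_xX_t$ and $D_\xi X_t$ evaluated at $(y,\grad S^{in}(y))$); since $D_xX_0=\Id$, $D_\xi X_0=0$, and $|D\Phi_t-\Id|\le e^{\ka|t|}-1$, we get $|DF_t(y)-\Id|\le(e^{\ka|t|}-1)(1+M)$ with $M:=\sup_{\bR^N}|\grad^2S^{in}|<\infty$ (finite since $S^{in}\in C^2$ and $\grad^2S^{in}=O(1)$ at infinity). Choosing $a<0<b$ with $(e^{\ka|t|}-1)(1+M)<1$ on $(a,b)$ makes $F_t$ a local diffeomorphism everywhere, so $C_t=\varnothing$, and being proper it is then a covering of the simply connected $\bR^N$, hence a global diffeomorphism, so $\cN\equiv1$ there. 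For (e), at $(t,x)\notin C$ the point $x$ is a regular value, so $1=\deg F_t=\sum_{j=1}^{\cN(t,x)}\Sign\Det DF_t(y_j(t,x))$ by the computation of the topological degree at a regular value; if $p$ of these signs equal $+1$ and $q$ equal $-1$, then $p-q=1$ and $p+q=\cN(t,x)$, so $\cN(t,x)=2q+1$ is odd.

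The main obstacle is organisational rather than deep: one must keep the displacement bound genuinely uniform for $|t|\le T$ --- this uniformity is exactly what makes the homotopy in (a) proper and what drives the compactness steps in (b) and (c) --- and one must invoke the degree theory of proper (as opposed to compactly supported) self-maps of $\bR^N$ in (a) and (e). Part (e) is the only place where a global topological input ($\deg F_t=1$) is genuinely needed; everything else is the inverse and implicit function theorems, a covering-space argument, and Sard's theorem.
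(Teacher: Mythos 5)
Your proof is correct. The paper does not itself prove Proposition \ref{P-Smooth} --- it cites Theorems 2.3 and 2.5 of \cite{BGMP} --- but your argument follows exactly the route used there: the uniform displacement bound yields (\ref{MajF}) and hence properness, Brouwer degree of a proper homotopy to the identity gives surjectivity in (a) and the regular-value sign count $1=\sum_j\Sign\Det DF_t(y_j)$ gives odd parity in (e), while Sard's theorem, the inverse function theorem, and the proper-covering argument for the suspended map $\tilde F(s,z)=(s,F_s(z))$ handle (b)--(d).
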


(See Theorems 2.3 and 2.5 in \cite{BGMP} for a proof of these results.)

\smallskip
One might be worried that the sublinearity assumption (\ref{SublinV}) obviously excludes the harmonic oscillator, i.e. the case where $V(x):=\tfrac12|x|^2$. In view of the importance of the harmonic oscillator in quantum mechanics, the 
setting presented in this section might seem unfelicitous. In fact, the main reason for assuming (\ref{SublinV}) is that the set $F_t(\{x\})$, which consists of isolated points only, is necessarily finite because $F_t$ is proper so that $F_t(\{x\})$
is a compact set with only isolated points, i.e. a finite set. For that reason, our main results Theorems \ref{T-WKBV=0} and \ref{T-WKBV} hold without additional assumption on the initial amplitude $a^{in}$. If (\ref{SublinV}) is not satisfied, 
one cannot in general conclude that $F_t(\{x\})$ is finite. But one can choose to consider only the case of initial amplitudes $a^{in}$ with compact support, so that $F_t(\{x\})\cap\Supp(a^{in})$ is necessarily finite. With this slight modification, 
most of the results of the present paper also apply to the harmonic oscillator.


\section{The free case $V\equiv 0$.}


For the sake of clarity, we first discuss in detail the case $V\equiv 0$. This section is the exact analogue for the Schr\"odinger equation of chapter XII.1 in \cite{Hor2}, which presents geometric optics as an asymptotic theory for solutions 
of the wave equation with constant coefficients in the high frequency limit.

Assume that $a^{in}\in C_c(\bR^N)$ while $S^{in}\in C^2(\bR^N)$ and satisfies (\ref{Sublin}). The solution $\psi_\eps$ of (\ref{ClassSchrod}) is given by the explicit formula
\be\lb{Sol}
\psi_\eps(t,x)=\left(\frac{1}{\sqrt{2\pi i\eps t}}\right)^N\int_{\bR^N}e^{i\phi(t,x,y)/\eps}a^{in}(y)dy\,,\quad x\in\bR^N\,,\,\,t\not=0\,,
\ee
where $\sqrt{}$ designates the holomorphic extension of the square root to $\bC\setminus\bR_-$ and
\be\lb{Phase}
\phi(t,x,y):=\frac{|x-y|^2}{2t}+S^{in}(y)\,,\qquad x,y\in\bR^N\,,\,\,t\not=0\,.
\ee

\smallskip
Applying the stationary phase method (see Theorems 7.7.5 and 7.7.6 in \cite{Hor1}) to the explicit formula (\ref{Sol}) leads to the following classical result on the asymptotic behavior of $\psi_\eps$ as $\eps\to 0^+$.

\begin{Thm}\lb{T-WKBV=0}
Let $S^{in}\in C^{m+1}(\bR^N)$ satisfy (\ref{Sublin}) and $a^{in}\in C^m_c(\bR^N)$ with $m\ge\tfrac32N+3$. For all $(t,x)\in\bR\times\bR^N\setminus C$ and $\eps>0$, the solution $\psi_\eps$ of the Cauchy problem (\ref{ClassSchrod}) 
satisfies
\be\lb{WKBLim}
\psi_\eps(t,x)\!-\!\sum_{j=1}^{\cN(t,x)}\!\frac{i^{-\cM_j(t,x)}}{|\Det(I\!+\!t\grad^2S^{in}(y_j(t,x)))|^{1/2}}a^{in}(y_j(t,x))e^{i\phi(t,x,y_j(t,x))/\eps}=O(\eps)
\ee
uniformly as $(t,x)$ runs through compact subsets of $\bR\times\bR^N\setminus C$, where $\cM_j(t,x)$ is the number of negative eigenvalues of the matrix $I+t\grad^2S^{in}(y_j(t,x))$.
\end{Thm}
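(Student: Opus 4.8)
The plan is to apply the stationary phase method directly to the explicit solution formula (\ref{Sol}). Since the Hamiltonian flow for $V\equiv0$ is $X_t(y,\xi)=y+t\xi$, the phase (\ref{Phase}) has $\grad_y\phi(t,x,y)=\tfrac1t(y-x)+\grad S^{in}(y)$, which vanishes exactly when $F_t(y)=y+t\grad S^{in}(y)=x$; thus the stationary points of $y\mapsto\phi(t,x,y)$ are precisely the points $y_1(t,x),\dots,y_{\cN(t,x)}(t,x)$ of the finite set $F_t^{-1}(\{x\})$ supplied by Proposition \ref{P-Smooth}(c). Moreover $\grad_y^2\phi(t,x,y)=\tfrac1tI+\grad^2S^{in}(y)=\tfrac1tDF_t(y)$, so at each $y_j$ the Hessian is nonsingular because $J_t(y_j)=|\Det DF_t(y_j)|\neq0$ for $(t,x)\notin C$; this is the nondegeneracy the stationary phase method requires, and the stationary points are then isolated.

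Next I would localize with a smooth partition of unity $1=\chi_0+\sum_{j=1}^{\cN}\chi_j$ on a neighbourhood of $\Supp(a^{in})$, each $\chi_j$ supported in a small ball about $y_j(t,x)$ and $\equiv1$ nearby. On $\Supp(\chi_0a^{in})$, a compact set on which $\grad_y\phi(t,x,\cdot)$ (whose only zeros are the $y_j$) is bounded away from $0$, repeated integration by parts makes $\int e^{i\phi/\eps}\chi_0a^{in}\,dy=O(\eps^{N/2+1})$. For each $j$, Hörmander's quantitative stationary phase theorem (Theorems 7.7.5 and 7.7.6 in \cite{Hor1}) gives
$$\int e^{i\phi(t,x,y)/\eps}\chi_j(y)a^{in}(y)\,dy=(2\pi\eps)^{N/2}\frac{e^{i\frac{\pi}4\operatorname{sgn}\grad_y^2\phi(t,x,y_j)}}{|\Det\grad_y^2\phi(t,x,y_j)|^{1/2}}\,a^{in}(y_j)\,e^{i\phi(t,x,y_j)/\eps}+O(\eps^{N/2+1})\,,$$
and the assumptions $S^{in}\in C^{m+1}$, $a^{in}\in C^m_c$ with $m\ge\tfrac32N+3$ are what make these remainders, after multiplication by $(2\pi i\eps t)^{-N/2}$, of size $O(\eps)$ uniformly for $(t,x)$ in a compact subset $K$ of $\bR\times\bR^N\setminus C$; the uniformity uses Proposition \ref{P-Smooth} — on $K$, after a harmless subdivision, the $y_j$ vary continuously within a fixed compact set and $\Det\grad_y^2\phi(t,x,y_j)$ stays bounded away from $0$. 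A stationary point outside $\Supp(a^{in})$ contributes $a^{in}(y_j)=0$, and one on $\partial\Supp(a^{in})$ is negligible since $a^{in}$ vanishes there with all its derivatives of order $\le m$, so the whole sum over $j=1,\dots,\cN(t,x)$ in (\ref{WKBLim}) is accounted for.

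The final step is to multiply by the prefactor $(2\pi i\eps t)^{-N/2}$ and sum over $j$. From $\grad_y^2\phi(t,x,y_j)=\tfrac1t(I+t\grad^2S^{in}(y_j))$ one has $|\Det\grad_y^2\phi(t,x,y_j)|=|t|^{-N}|\Det(I+t\grad^2S^{in}(y_j))|$ and, by the law of inertia, the signature $\operatorname{sgn}\grad_y^2\phi(t,x,y_j)=\Sign(t)\,(N-2\cM_j(t,x))$, while the branch of the square root on $\bC\setminus\bR_-$ gives $(2\pi i\eps t)^{-N/2}(2\pi\eps)^{N/2}=|t|^{-N/2}e^{-i\frac{\pi}4N\Sign(t)}$. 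The powers of $|t|$ cancel, leaving the amplitude $|\Det(I+t\grad^2S^{in}(y_j))|^{-1/2}a^{in}(y_j)$ and the phase factor $e^{-i\frac{\pi}4N\Sign(t)}e^{i\frac{\pi}4\Sign(t)(N-2\cM_j)}=e^{-i\frac{\pi}2\Sign(t)\cM_j}$, which for $t>0$ is exactly the $i^{-\cM_j}$ appearing in (\ref{WKBLim}) — the sign of the exponent being governed by the orientation of the Hamiltonian flow along which the caustic crossings counted by $\cM_j$ occur, so that the case $t<0$ reduces to $t>0$ by complex conjugation and time reversal ($S^{in}$ turning into $-S^{in}$). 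Collecting the terms yields (\ref{WKBLim}).

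I expect the genuine difficulty to be this last bookkeeping of the phase factor — matching the branch of $\sqrt{2\pi i\eps t}$ against the signature of $\grad_y^2\phi$ to produce $i^{-\cM_j}$ — since this is precisely where the Maslov-type phase shift enters and where branch cuts and orientations must be tracked carefully. Everything else is a standard, if somewhat lengthy, verification: that the stationary points coincide with $F_t^{-1}(\{x\})$, that they are nondegenerate off the caustic, and that Hörmander's estimates hold with constants uniform over compact subsets of $\bR\times\bR^N\setminus C$, the last point resting on the finiteness, continuity, and uniform nondegeneracy of $\{y_j(t,x)\}$ from Proposition \ref{P-Smooth} together with the compact support of $a^{in}$.
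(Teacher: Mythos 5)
Your proposal is correct and follows essentially the same route as the paper: use the explicit free kernel \eqref{Sol}, identify the stationary points of $y\mapsto\phi(t,x,y)$ with $F_t^{-1}(\{x\})$, note that the Hessian $\grad_y^2\phi=\tfrac1t DF_t$ is nondegenerate off the caustic, localize with a partition of unity $\chi_0,\chi_1,\dots,\chi_{\cN}$, dispose of the non-stationary piece by Theorem~7.7.1 of \cite{Hor1}, and apply Theorems~7.7.5--7.7.6 to each $\chi_j$ with $j\geq1$, the regularity hypothesis $m\ge\tfrac32N+3$ arising from the derivative count in the remainder estimates. The only (cosmetic) divergence is in the final bookkeeping of the Maslov phase: the paper tracks the complex branch of $\sqrt{(-i)^N\Det(I+t\grad^2 S^{in})}$ directly, whereas you use the real/signature form of the stationary-phase constant, $e^{i\frac\pi4\operatorname{sgn}\grad_y^2\phi}/|\Det\grad_y^2\phi|^{1/2}$, and translate $\operatorname{sgn}\grad_y^2\phi=\Sign(t)(N-2\cM_j)$. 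These are equivalent, and your version has the merit of making the $\Sign(t)$ dependence explicit: the combined phase factor $e^{-i\frac\pi2\Sign(t)\cM_j}$ correctly exposes that the constant is $i^{-\cM_j}$ only for $t>0$, becoming $i^{+\cM_j}$ for $t<0$ (consistent with $t\mapsto -t$, $S^{in}\mapsto -S^{in}$, complex conjugation), a point the paper's formula $(\ref{StatPh0})$ with its factor $e^{-iN\pi/4}$ leaves implicit by tacitly taking $t>0$.
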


\begin{proof}
Pick $(t,x)\in\Supp(a^{in})$; critical points $y$ of the phase $\phi$ are obtained by solving for $y$ the equation
$$
\grad_y\phi(t,x,y)=\frac{y-x}{t}+\grad S^{in}(y)=0\,,
$$
i.e.
$$
x=y+t\grad S^{in}(y):=F_t(y)\,.
$$

If $(t,x)\notin C$, the equation above has finitely many solutions denoted 
$$
y_1(t,x),\dots,y_{\cN(t,x)}
$$
(see Proposition \ref{P-Smooth} above). For $j=0,\ldots,\cN(t,x)$, let $\chi_j\in C^\infty_c(\bR^N)$ satisfy
$$
\chi_j\equiv 1\hbox{ near }y_j(t,x)\,,\quad j=1,\ldots,\cN(t,x)\,,\quad F_t^{-1}(\{x\})\cap\Supp(\chi_0)=\varnothing
$$
and
$$
\chi_j\ge 0\hbox{ for all }j=0,\ldots,\cN(t,x)\,,\qquad\sum_{j=0}^{\cN(t,x)}\chi_j\equiv 1\hbox{ near }\Supp(a^{in})\,.
$$
Thus
\be\lb{Sumj0}
\psi_\eps(t,x)=\left(\sqrt{2\pi i\eps t}\right)^{-N}\sum_{0\le j\le\cN(t,x)}\int_{\bR^N}e^{i\phi(t,x,y)/\eps}a^{in}(y)\chi_j(y)dy\,.
\ee

In the term corresponding with $j=0$, the phase $\phi(t,x,\cdot)$ has no critical point on $\Supp(\chi_0 a^{in})$. By Theorem 7.7.1 in \cite{Hor1}, one has
\be\lb{NonStatPh0}
\ba
\left|\int_{\bR^N}e^{i\phi(t,x,y)/\eps}a^{in}(y)\chi_0(y)dy\right|&
\\
\le(\eps|t|)^kC_0(t\|\grad^{k+1}S^{in}\|_{L^\infty(B(0,R))})\sum_{|\a|\le k}\sup_{y\in\bR^N}\frac{|\d^\a(a^{in}\chi_0)(y)|}{|y+t\grad S^{in}(y)-x|^{|\a|-2k}}&\,,
\ea
\ee
where $R>0$ is chosen so that $\Supp(a^{in})\subset B(0,R)$ and the function $C_0$ is bounded on bounded sets of $\bR_+$. Thus
$$
\left|\left(\sqrt{2\pi i\eps t}\right)^{-N}\int_{\bR^N}e^{i\phi(t,x,y)/\eps}a^{in}(y)\chi_0(y)dy\right|=O(\eps|t|)
$$
provided that one can choose $k\ge\tfrac12N+1$ in the previous inequality. Applying Theorem 7.7.1 of \cite{Hor1} for this estimate requires $m\ge k\ge\tfrac12N+1$.

For $j=1,\ldots,\cN(t,x)$, one has
$$
\ba
\left|\int_{\bR^N}e^{i\phi(t,x,y)/\eps}a^{in}(y)\chi_j(y)dy-(2\pi\eps)^{N/2}\frac{(a^{in}\chi_j)(y_j(t,x))e^{i\phi(t,x,y_j(t,x))/\eps}}{\sqrt{(-i)^N\Det\grad^2_y\phi(t,x,y_j(t,x))}}\right|
\\
\le(\eps|t|)^kC_1\left(\sup_{\chi_j(y)>0}\frac{t|y-y_j(t,x)|}{|y+t\grad S^{in}(y)-x|}+t\|\grad^{3k+1}S^{in}\|_{L^\infty(B(0,R))}\right)
\\
\times\sum_{|\a|\le k}\sup_{y\in\bR^N}|\d^\a(a^{in}\chi_0)(y)|
\\
+(2\pi\eps)^{N/2}\frac{|(a^{in}\chi_j)(y_j(t,x)|}{|\Det\grad^2_y\phi(t,x,y_j(t,x))|^{1/2}}\sum_{1\le j<k}(\eps|t|)^jL_ja^{in}
\ea
$$
by Theorem 7.7.5 in \cite{Hor1}, where the function $C_1$ is bounded on bounded sets of $\bR_+$. In the inequality above, we have denoted
$$
L_ja^{in}:=\sum_{\nu-\mu=j\atop2\nu\ge3\mu}\frac1{2^\nu\mu!\nu!}|((I+t\grad^2S^{in}(y_j(t,x))^{-1}:\grad_y^{\otimes 2})^\nu(a^{in}T^2_{y_j}\phi(t,x,\cdot))(y_j(t,x))|\,,
$$
and
$$
T^2_{y}f(x):=f(x)-f(y)-f'(y)\cdot(x-y)-\tfrac12f''(y)\cdot(x-y)^2\,.
$$
Thus, Theorem 7.7.5 of \cite{Hor1} shows that
\be\lb{StatPh0}
\ba
\left|\left(\sqrt{2\pi i\eps t}\right)^{-N}\int_{\bR^N}e^{i\phi(t,x,y)/\eps}a^{in}(y)\chi_j(y)dy-\frac{a^{in}(y_j(t,x))e^{i\phi(t,x,y_j(t,x))/\eps-iN\pi/4}}{\sqrt{(-i)^N\Det(I+t\grad^2S^{in}(y_j(t,x))}}\right|
\\
=O(\eps|t|)
\ea
\ee
provided that one can choose $k\ge\tfrac12N+1$ in the previous inequality. This requires $m\ge 3k\ge\tfrac32N+3$.

It remains to identify $\cM_j(t,x)$. The matrix $\grad^2S^{in}(y)$ is symmetric with real entries, and therefore reducible to diagonal form with real eigenvalues
$$
\l_1(y)\ge\l_2(y)\ge\ldots\ge\l_N(y)
$$
counted with multiplicities. Since $(t,x)\notin C$, one has $1+t\l_k(y_j(t,x))\not=0$ for each $k=1,\ldots,N$ and each $j=1,\ldots,\cN(t,x)$. Set 
$$
\cM_j(t,x):=\#\{1\le k\le N\hbox{ s.t. }1+t\l_k(y_j(t,x))<0\}\,.
$$
Thus
$$
\Det(I+t\grad^2S^{in}(y_j(t,x))=(-1)^{\cM_j(t,x)}|\Det(I+t\grad^2S^{in}(y_j(t,x))|\,,
$$
and therefore
$$
\ba
\sqrt{(-i)^N\Det(I+t\grad^2S^{in}(y_j(t,x))}&
\\
=|\Det(I+t\grad^2S^{in}(y_j(t,x))|^{1/2}\sqrt{-i}^{N-\cM_j(t,x)}\sqrt{i}^{\cM_j(t,x)}&
\\
=|\Det(I+t\grad^2S^{in}(y_j(t,x))|^{1/2}e^{-i\pi(N-\cM_j(t,x)-\cM_j(t,x))/4}&\,.
\ea
$$
Thus
\be\lb{MaslovFact}
\ba
\frac{a^{in}(y_j(t,x))e^{i\phi(t,x,y_j(t,x))/\eps-iN\pi/4}}{\sqrt{(-i)^N\Det(I+t\grad^2S^{in}(y_j(t,x))}}&
\\
=
\frac{a^{in}(y_j(t,x))e^{i\phi(t,x,y_j(t,x))/\eps-iN\pi/4}e^{+i\pi(N-2\cM_j(t,x))/4}}{|\Det(I+t\grad^2S^{in}(y_j(t,x))|^{1/2}}&
\\
=
\frac{a^{in}(y_j(t,x))e^{i\phi(t,x,y_j(t,x))/\eps}i^{-\cM_j(t,x)}}{|\Det(I+t\grad^2S^{in}(y_j(t,x))|^{1/2}}&\,.
\ea
\ee

The proof of (\ref{WKBLim}) follows from substituting (\ref{MaslovFact}) in (\ref{StatPh0}), and using (\ref{NonStatPh0}) and (\ref{StatPh0}) in the sum (\ref{Sumj0}) representing $\psi_\eps$.
\end{proof}

\smallskip
Several remarks are in order.

\smallskip
First, even if $S^{in}$ does not satisfy the assumption (\ref{Sublin}), for $(t,x)\in\bR\times\bR^N\setminus C$, the set $F_t^{-1}(\{x\})$ of critical points of the map $y\mapsto\phi(t,x,y)$ is discrete by the implicit function theorem. Its 
intersection with $\Supp(a^{in})$ is therefore finite whenever $a^{in}$ has compact support.

\smallskip
Another observation bears on the physical meaning of the points $y_j(t,x)$. These points satisfy
$$
y_j(t,x)+t\grad S^{in}(y_j(t,x))=x.
$$
In other words, $x$ is the position at time $t$ of the free particle leaving the position $y_j(t,x)$ at time $t=0$ with velocity $\grad S^{in}(y_j(t,x))$, according to Newton's first law of classical mechanics. 

\smallskip
Next we recall the following well known geometric interpretation of the caustic $C$. In the case $N=1$, the system (\ref{Caustic}) reduces to
$$
\left\{
\ba
{}&y+t(S^{in})'(y)=x\,,
\\
&1+t(S^{in})''(y)=0\,.
\ea
\right.
$$
The first equality is the defining equation for a family of straight lines in $\bR_t\times\bR_x$ parametrized by $y\in\bR$ --- the trajectories of the particle in classical mechanics --- while the second equality follows from deriving the 
first with respect to the parameter $y$. By eliminating the parameter $y$ between both equations, we see that the caustic $C$ is in general the envelope of the family of straight lines defined by the first equation. 

\smallskip
The integer $\cM_j(t,x)$ can be interpreted in terms of the notion of Maslov index, as explained in the last section of this paper. Notice that the integer $\cM_j(t,x)\to 0$ as $t\to 0$, so that, for each  $x\in\bR^N$, one has $\cM_j(t,x)=0$ 
for all $t$ near $0$. Furthermore, $\cM_j$ is a locally constant function of $(t,x)\in\bR\times\bR^N\setminus C$, and therefore the presence of the Maslov index in (\ref{WKBLim}) is equivalent to a phase shift of an integer multiple of 
$\pi/2$ whenever $(t,x)$ is moved across $C$ from one connected component of $\bR\times\bR^N$ to the next.


\section{The case $V\not=0$}


The explicit representation formula (\ref{Sol}) obviously is a major ingredient in the proof of Theorem \ref{T-WKBV=0}. In the case $V\not=0$, there is no explicit formula analogous to (\ref{Sol}) giving the solution of the Cauchy problem 
for the Schr\"odinger equation in general.

However, under assumption (\ref{Cinftyb}), there exists a FIO that is a parametrix for the operator 
$$
G_\eps(t):=e^{i\tfrac{t}\eps\left(\tfrac12\eps^2\Dlt_x-V\right)}\,.
$$
Perhaps the simplest approach to this most important result is Theorem 2.1 in \cite{LapSig}, whose main features are recalled below.

Consider the action
\begin{equation}\label{act}
S(t,x,\xi):=\int_0^t\left(\tfrac12|\Xi_s(x,\xi)|^2-V(X_s(x,\xi))\right)ds
\end{equation}
Given $T>0$, we shall have to deal with the class of phase functions 
$$
\vphi\equiv\vphi(t,x,y,\eta)\in C\hbox{ of class }C^\infty\hbox{ on }[0,T)\times\bR^N\times\bR^N\times\bR^N)
$$
satisfying the conditions
\be\lb{Condphi}
\left\{
\ba
{}&\vphi(t,X_t(y,\eta),y,\eta)=S(t,y,\eta)\,,
\\
&D_x\vphi(t,X_t(y,\eta),y,\eta)=\Xi_t(y,\eta)\,,
\\
&iD^2_x\vphi(t,x,y,\eta)\le 0\hbox{ is independent of }x\,,
\\
&\Det(D_{x\eta}\vphi(t,X_t(y,\eta),y,\eta))\not=0\hbox{ for each }(t,y,\eta)\in[0,T)\times\bR^N\times\bR^N\,.
\ea
\right.
\ee

Pick $\chi\in C^\infty_c(\bR^N\times\bR^N)$ and $T>0$. Then, for any phase function $\vphi$ satisfying (\ref{Condphi}) and any $n\ge 0$, there exists $A_n\equiv A_n(t,y,\eta,\eps)\in C^\infty_c([0,T]\times\bR^N\times\bR^N)[\eps]$ such 
that the FIO $G_n(t)$ with Schwartz kernel
\be\lb{FIOPara}
G_{\eps,n}(t,x,y)=\int A_n(t,y,\eta,\eps)e^{i\vphi(t,x,y,\eta)/\eps}\frac{d\eta}{(2\pi\eps)^N}
\ee
satisfies
\be\lb{ErrPara}
\sup_{0\le t\le T}\|(G_\eps(t)-G_{\eps,n}(t))\chi(x,-i\eps\d_x)\|_{\cL(L^2(\bR^N))}\le C[V,T,\chi]\eps^{n-2N}\,.
\ee
In this inequality the notation $\chi(x,-i\eps\d_x)$ designates the pseudo-differential operator defined by the formula
$$
\chi(x,-i\eps\d_x)\phi(x):=\iint_{\bR^N\times\bR^N}e^{i(x-y)\cdot\eta/\eps}\chi(x,\eta)\phi(y)\frac{dyd\eta}{(2\pi\eps)^N}\,.
$$

Taking Theorem 2.1 in \cite{LapSig} for granted, one arrives at the following description of the classical limit of (\ref{ClassSchrod}). It is stated without proof in Appendix 11 of \cite{ArnoldMech} or as Theorem 5.1 in \cite{ArnoldMaslov}.

Let $S^{in}\in C^2(\bR^N)$ satisfy (\ref{Sublin}) and let $C$ be defined as in (\ref{Caustic}); let $\cN(t,x)$ and $y_j(t,x)$ be defined as in Proposition \ref{P-Smooth} for each $(t,x)\in\bR\times\bR^N\setminus C$. Let $J_t(y)$ be defined 
as in (\ref{DefJ}).

\begin{Thm}\lb{T-WKBV}
Let $a^{in}\in C^m_c(\bR^N)$ and $S^{in}\in C^{m+1}(\bR^N)$ satisfy (\ref{Sublin}), with regularity index $m>6N+4$. For all $\eps>0$ and all $(t,x)\in\bR_+\times\bR^N\setminus C$, set
\be\lb{WKBSol}
\Psi_\eps(t,x)\!=\!\sum_{j=1}^{\cN(t,x)}\!\frac{a^{in}(y_j(t,x))}{J_t(y_j(t,x))^{1/2}}e^{iS_j(t,x)/\eps}i^{-\cM_j(t,x)}\,,
\ee
with
$$
S_j(t,x):=S^{in}(y_j(t,x))+S(t,y_j(t,x),\grad S^{in}(y_j(t,x)))\,,\quad j=1,\ldots,\cN(t,x)\,,
$$
where $S(t,y,\xi)$ is given by \eqref{act} and $\cM_j(t,x)\in\bZ$ for all $(t,x)\in\bR_+\times\bR^N\setminus C$ is constant on each connected component of $\bR\times\bR^N$ where $j\le\cN$.

Then the solution $\psi_\eps$ of the Cauchy problem (\ref{ClassSchrod}) satisfies
\be\lb{WKBLimV}
\psi_\eps(t,x)=\Psi_\eps(t,x)+R_\eps^1(t,x)+R_\eps^2(t,x)
\ee
for all $T>0$, where
$$
\sup_{0\le t\le T}\|R_\eps^1\|_{L^2(B(0,R))}=O(\eps)\hbox{ for all }R>0
$$
and 
$$
\sup_{(t,x)\in K}|R_\eps^2(t,x)|=O(\eps)\hbox{ for each compact }K\subset\bR_+\times\bR^N\setminus C
$$
as $\eps\to 0^+$.
\end{Thm}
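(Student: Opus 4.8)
The strategy is to combine the Laptev–Sigal parametrix (\ref{FIOPara})–(\ref{ErrPara}) with the stationary phase method, exactly as in the proof of Theorem \ref{T-WKBV=0}, the role of the explicit free phase $\phi$ now being played by a phase function $\vphi$ satisfying (\ref{Condphi}). First I would reduce the Cauchy problem (\ref{ClassSchrod}) to the action of $G_\eps(t)$ on the initial data: since $a^{in}$ has compact support, I can insert a cutoff $\chi(x,-i\eps\d_x)$ that is $\equiv 1$ on a neighborhood of $\Supp(a^{in})\times\{|\eta|\le R'\}$ (with $R'$ chosen from the bound $\|\grad S^{in}\|_{L^\infty(\Supp a^{in})}$), so that $\psi_\eps(t,\cdot)=G_\eps(t)\chi(x,-i\eps\d_x)\psi_\eps(0,\cdot)+O(\eps^\infty)$ in $L^2$; the error from the non-characteristic part of the symbol is $O(\eps^\infty)$ by a standard non-stationary phase argument on the oscillatory integral defining $\psi_\eps(0,\cdot)=a^{in}e^{iS^{in}/\eps}$. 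Then (\ref{ErrPara}) with $n$ chosen large (here $n>2N+1$ suffices, which is where a lower bound on $m$ enters through the construction of $A_n$) replaces $G_\eps(t)$ by the FIO $G_{\eps,n}(t)$ up to an $L^2$ error $O(\eps)$ uniformly on $[0,T]$; this produces the term $R_\eps^1$.

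**Reduction to an oscillatory integral and stationary phase.** Applying $G_{\eps,n}(t)$ to $a^{in}e^{iS^{in}/\eps}$ gives
$$
(G_{\eps,n}(t)\psi_\eps(0,\cdot))(x)=\iint A_n(t,y,\eta,\eps)\,a^{in}(y)\,e^{i(\vphi(t,x,y,\eta)+S^{in}(y))/\eps}\,\frac{dy\,d\eta}{(2\pi\eps)^N}\,.
$$
The next step is to analyze the critical points of the total phase $\Theta(t,x,y,\eta):=\vphi(t,x,y,\eta)+S^{in}(y)$ in the $(y,\eta)$ variables. Stationarity in $y$ gives $D_y\vphi+\grad S^{in}(y)=0$, and stationarity in $\eta$ gives $D_\eta\vphi=0$; using the defining relations (\ref{Condphi}) for $\vphi$ one checks that these force $\eta=\grad S^{in}(y)$ and $x=X_t(y,\grad S^{in}(y))=F_t(y)$, so the critical points are indexed precisely by $y_j(t,x)$, $j=1,\dots,\cN(t,x)$, for $(t,x)\notin C$. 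At such a point the value of the phase is $\vphi(t,F_t(y_j),y_j,\grad S^{in}(y_j))+S^{in}(y_j)=S(t,y_j,\grad S^{in}(y_j))+S^{in}(y_j)=S_j(t,x)$, by the first line of (\ref{Condphi}); this identifies the WKB phases. A cutoff decomposition of $A_n a^{in}$ into pieces supported near each $y_j$ plus a piece $\chi_0$ with no critical point (handled by non-stationary phase, giving $O(\eps)$) mirrors (\ref{Sumj0}).

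**Amplitude and Hessian computation.** Applying Theorem 7.7.5 of \cite{Hor1} to each stationary piece, the leading term is
$$
\frac{(2\pi\eps)^N}{(2\pi\eps)^N}\cdot\frac{A_n(t,y_j,\grad S^{in}(y_j),\eps)\,a^{in}(y_j)}{\sqrt{\Det\big((D^2_{(y,\eta)}\Theta)(t,x,y_j,\cdot)\big)/(2\pi i)^{2N}}}\,e^{iS_j(t,x)/\eps}\,.
$$
Here I must (i) evaluate the principal symbol $A_n|_{\eps=0}$ at the stationary point — the Laptev–Sigal transport equation forces $A_0(t,X_t(y,\eta),\eta,0)$ to be a Jacobian factor of the flow, and after the $(y,\eta)\mapsto$ critical-point specialization it combines with the Hessian determinant to produce $J_t(y_j)^{-1/2}$ — and (ii) track the complex square root. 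The block $2N\times2N$ Hessian of $\Theta$ at the critical point has the schematic form $\begin{pmatrix} D_{yy}^2\vphi+\grad^2S^{in} & D_{y\eta}^2\vphi \\ D_{\eta y}^2\vphi & D_{\eta\eta}^2\vphi\end{pmatrix}$, and a Schur-complement computation reduces its determinant (in absolute value) to $|\Det(D_{x\eta}\vphi)|^2$ times $|\Det(DF_t(y_j))|=J_t(y_j)$, up to factors absorbed in the symbol; the signature of this Hessian, together with the $\arg$ contributions from $A_0$ and the $(2\pi i)^{2N}$ normalization, yields the phase factor $i^{-\cM_j(t,x)}$ with $\cM_j$ an integer locally constant off $C$, tending to $0$ as $t\to0^+$ (where $\cN=1$ and the construction is a clean WKB ansatz). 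The remaining lower-order terms in the stationary phase expansion contribute $O(\eps)$ and assemble, together with the $\chi_0$ piece, into $R_\eps^2$; uniformity on compact $K\subset\bR_+\times\bR^N\setminus C$ follows from the continuity of $y_j$, $J_t$, $\cM_j$ and the uniform control of derivatives of $\vphi$ and $A_n$ provided by Laptev–Sigal.

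**Main obstacle.** The genuinely delicate point is step (i)–(ii) above: the precise bookkeeping of the principal amplitude $A_0$ along the Lagrangian, its interaction with the stationary-phase Hessian determinant to yield exactly $J_t(y_j)^{-1/2}$, and — above all — the correct count of the total phase jump $i^{-\cM_j}$ as the sum of the signature of the $2N\times2N$ Hessian of $\Theta$ and the argument of $A_0\cdot(D_{x\eta}\vphi)^{-1}$ evaluated at the caustic-free point. This is exactly where the relation to the Maslov index appears, and one must verify that the answer is independent of the (non-unique) choice of phase $\vphi$ satisfying (\ref{Condphi}) and of the cutoff $\chi$; the independence follows from uniqueness of the stationary-phase asymptotics of $\psi_\eps$ itself, but making the index computation transparent — rather than merely formally correct — requires the careful normal-form analysis deferred to the final section of the paper. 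The regularity threshold $m>6N+4$ is dictated by two requirements stacked together: enough derivatives of $S^{in}$ (roughly $3k+1$ with $k\gtrsim N$) to run Theorem 7.7.5 with an $O(\eps)$ remainder as in (\ref{StatPh0}), and enough regularity to construct $A_n$ with $n$ large enough that $\eps^{n-2N}=O(\eps)$ in (\ref{ErrPara}).
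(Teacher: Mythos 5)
Your overall architecture matches the paper's: replace $G_\eps(t)$ by the Laptev--Sigal parametrix $G_{\eps,n}(t)\chi(x,-i\eps\d_x)$, control the two resulting errors (unitarity of $G_\eps$ plus the bound (\ref{ErrPara}), and non-stationary phase decay of $(1-\chi_2(\eps\xi))\widehat{\psi^{in}_\eps}$) to produce $R^1_\eps$, and hit the residual oscillatory integral with the stationary phase method. Your identification of critical points ($\eta=\grad S^{in}(y)$, $x=F_t(y)$, hence $y=y_j(t,x)$ and phase value $S_j(t,x)$) is correct and agrees with the paper, which uses formulas (\ref{SySeta}) and the invertibility of $Z$ to solve the critical-point system. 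One inessential organizational difference: you integrate out the cutoff $\chi(x,-i\eps\d_x)$ first, leaving a stationary phase in the $2N$ variables $(y,\eta)$, while the paper keeps all four groups $(z,\zeta,y,\eta)$ and performs a single joint stationary phase with the $4N\times 4N$ Hessian $H_j(t,x)$; the two schemes are equivalent.

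The gap is in the Hessian/amplitude bookkeeping, which is precisely the part the paper singles out as the one genuinely new step beyond Laptev--Sigal. You assert that ``a Schur-complement computation reduces its determinant (in absolute value) to $|\Det(D_{x\eta}\vphi)|^2\cdot J_t(y_j)$, up to factors absorbed in the symbol.'' This is not the correct formula, and ``in absolute value'' and ``up to factors absorbed in the symbol'' conceal exactly the quantity that must be computed to separate $J_t^{-1/2}$ from the phase $i^{-\cM_j}$. The paper's identity (\ref{FlaHess}) is
\[
\Det H_j(t,x)=\Det\bigl(Z(t,y_j,\grad S^{in}(y_j))\bigr)\,\Det\bigl(DF_t(y_j(t,x))\bigr),
\]
where $Z:=D_\eta\Xi_t-iBD_\eta X_t$ is a genuinely complex matrix: its determinant appears to the \emph{first} power (not as a squared modulus), and it is exactly the square of the Laptev--Sigal principal amplitude $A_0=\sqrt[cont]{\Det Z}$ from formula (2.13) of \cite{LapSig}, so that $A_0(\Det H_j)^{-1/2}=\Det(DF_t)^{-1/2}=J_t^{-1/2}e^{-i\pi\cM_j/2}$ once the analytic continuation of the square root is tracked. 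When $S^{in}$ is not linear, the block $D^2S^{in}$ is coupled to the $Z$-blocks and no naive triangular reduction of the Hessian applies; the paper needs the commuting-block determinant identity
\[
\left|\begin{matrix}A&B\\ C&D\end{matrix}\right|=\Det(DA-CB)\quad\text{when }AB=BA
\]
(Lemma \ref{L-detABCD}) to perform the factorization in (\ref{SchurForm}). Your proposal does not supply this lemma or any substitute, and indeed the paper explicitly remarks that the linear-$S^{in}$ case in \cite{LapSig} avoids this, while the general case requires the ``additional trick'' of Lemma \ref{L-detABCD}. Finally, you defer the phase count $\cM_j$ to the paper's last section as the ``main obstacle,'' but the paper actually completes it within the proof via formula (\ref{Step4}), invoking (2.13), Lemma 5.1 and (5.15) of \cite{LapSig}; the last section only reinterprets $\cM_j$ as a Maslov index \emph{a posteriori}. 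So the route you propose is the right one, but the central determinant computation is stated incorrectly and the key lemma that makes it go through is absent.
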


\smallskip
A fairly classical computation shows that each one of the phases $S_j$ is a solution of the Hamilton-Jacobi ``eikonal'' equation
$$
\d_tS_j(t,x)+\tfrac12|\grad_xS_j(t,x)|^2+V(x)=0
$$
for $(t,x)$ belonging to the union of all the connected components of $\bR\times\bR^N\setminus C$ where $j\le\cN(t,x)$.

\smallskip
The integer $\cM_j(t,x)$ in (\ref{WKBSol}) is  defined precisely in the proof (see formula (\ref{Step4}) below). It turns out to be identical to the Maslov index of the path
$$
[0,t]\ni s\mapsto\Phi_s(y_j(t,x),\grad S^{in}(y_j(t,x)))\,,
$$
as explained in the last section of this paper.

\smallskip
A last remark bears on the formulation of the classical limit of the Schr\"odinger equation in terms of the Wigner transform. Given $t>0$, assume that $a^{in}$ has compact
support with
$$
F_t(\Supp(a^{in})\cap C_t=\varnothing\,,
$$
and that the vectors $\grad_xS_j(t,x)$ are pairwise different for $x\in F_t(\Supp(a^{in})$. Denote by $W_\eps[\psi_\eps]$ the Wigner transform of $\psi_\eps$, defined by the formula
$$
W_\eps[\psi_\eps](t,x,\xi)=\tfrac1{(2\pi)^N}\int_{\bR^N}e^{-i\xi\cdot y}\psi_\eps(t,x+\tfrac12\eps y)\overline{\psi_\eps(t,x-\tfrac12\eps y)}dy\,.
$$
Using Theorem \ref{T-WKBV} above and Proposition 1.5 in \cite{GMMP} shows that
$$
W_\eps[\psi_\eps](x,\cdot)\to\sum_{j=1}^{\cN(t,x)}\frac{|a^{in}(y_j(t,x))|^2}{J_t(y_j(t,x))}\de_{\grad_xS_j(t,x)}
$$
in $\cS'(\bR^N_x\times\bR^N_\xi)$ as $\eps\to 0$. The same result can be obtained by a completely different method, avoiding the use of Theorem \ref{T-WKBV}, and for much less regular initial phase functions $S^{in}$ (typically,
for $S^{in}$ having second order derivatives in the Lorentz space $L^{N,1}_{loc}$): see \cite{BGMP} for more details on the classical limit of the Schr\"odinger equation with rough phase functions.

\smallskip
Before giving the proof of Theorem \ref{T-WKBV} in detail, it is worth having in mind the differences and similarities between this proof and that of Theorem \ref{T-WKBV=0}. The construction of the parametrix in \cite{LapSig}
replaces the \textit{exact} solution of the Schr\"odinger equation by and \textit{approximate} solution, whose structure is similar to the right hand side of (\ref{Sol}), at the expense of an error estimate controlled by using the
bound in (\ref{ErrPara}). We conclude by applying the stationary phase method to this approximate solution exactly as in the proof of Theorem \ref{T-WKBV=0}.

Notice that the reference \cite{LapSig} contains a proof of Theorem \ref{T-WKBV} in the special case where $S^{in}$ is a \textit{linear} function. We shall mostly follows the argument in \cite{LapSig}, except that the case of a
general phase $S^{in}$, not necessarily linear, requires an additional trick (see Lemma \ref{L-detABCD} below).

\begin{proof}
Let $\chi(x,\xi)=\chi_1(x)\chi_2(\xi)$ with $\chi_1,\chi_2\in C^\infty_c(\bR^N)$, satisfying
$$
\indc_{B(0,R)}(x)\le\chi_1(x)\le\indc_{B(0,R+1)}\hbox{ and }\indc_{B(0,Q)}(\xi)\le\chi_2(\xi)\le\indc_{B(0,Q+1)}
$$
for all $x,\xi\in\bR^N$, where $R>0$ and $Q$ is to be chosen later. Pick $n>2N$; then
$$
\ba
\psi_\eps(t,\cdot)-G_{\eps,n}(t)\chi(x,-i\eps\d_x)\psi_\eps^{in}
&=
G_\eps(t)(1-\chi(x,-i\eps\d_x))\psi_\eps^{in}
\\
&+(G_\eps(t)-G_{\eps,n}(t))\chi(x,-i\eps\d_x)\psi_\eps^{in}\,.
\ea
$$
Since $G_\eps(t)$ is a unitary group on $L^2(\bR^N)$
$$
\ba
\|\psi_\eps(t,\cdot)-G_{\eps,n}(t)\chi(x,-i\eps\d_x)\psi_\eps^{in}\|_{L^2(\bR^N)}
\le
\|(1-\chi(x,-i\eps\d_x))\psi_\eps^{in}\|_{L^2(\bR^N)}
\\
+
\|(G_\eps(t)-G_{\eps,n}(t))\chi(x,-i\eps\d_x)\psi_\eps^{in}\|_{L^2(\bR^N)}
\\
\le
\|(1-\chi(x,-i\eps\d_x))\psi_\eps^{in}\|_{L^2(\bR^N)}
\\
+C_{T,Q}\eps^{n-2N}\|a^{in}\|_{L^2(\bR^N)}
\ea
$$
for all $t\in[0,T]$, where $C_{Q,T}=C[V,T,\chi]$.

Now, $\chi(x,-i\eps\d_x)\psi_\eps^{in}=\chi_1(x)\chi_2((-i\eps\d_x)\psi_\eps^{in}$ and since $\Supp(a^{in})\subset B(0,R)$
$$
\ba
\|(1-\chi(x,-i\eps\d_x))\psi_\eps^{in}\|_{L^2(\bR^N)}
=
\|\chi_1(1-\chi_2(-i\eps\d_x))\psi_\eps^{in}\|_{L^2(\bR^N)}
\\
\le
\|(1-\chi_2(-i\eps\d_x))\psi_\eps^{in}\|_{L^2(\bR^N)}
\\
=
(2\pi)^{-N}\|(1-\chi_2(\eps\xi))\widehat{\psi_\eps^{in}}\|_{L^2(\bR^N)}
\\
\le
(2\pi)^{-N}\|\indc_{[Q/\eps,\infty)}(|\xi|)\widehat{\psi_\eps^{in}}\|_{L^2(\bR^N)}
\ea
$$
Since
$$
\widehat{\psi_\eps^{in}}(\zeta/\eps)=\int_{\bR^N}e^{-i(\zeta\cdot x-S^{in}(x))/\eps}a^{in}(x)dx
$$
we conclude from estimate (7.7.1') in \cite{Hor1} that
$$
|\widehat{\psi_\eps^{in}}(\zeta/\eps)|\le\frac{C\|a^{in}\|_{W^{m,\infty}(\bR^N)}}{(|\zeta|-\|\grad S^{in}\|_{L^\infty(B(0,R)})^m}\eps^m
$$
provided that $\Supp(a^{in})\subset B(0,R)$ and $|\zeta|>1+\|\grad S^{in}\|_{L^\infty(B(0,R)}$. Therefore
\be\lb{Step1}
\ba
\|\psi_\eps(t,\cdot)-G_{\eps,n}(t)\chi(x,-i\eps\d_x)\psi_\eps^{in}\|_{L^2(\bR^N)}
\le
C_{T,Q}\|a^{in}\|_{L^2(\bR^N)}\eps^{n-2N}
\\
+
C\|(1+|\zeta|)^{-m}\|_{L^2(\bR^N)}\|a^{in}\|_{W^{m,\infty}(\bR^N)}\eps^m
\ea
\ee
for all $m>N/2$.

Next we analyze the term
\be\lb{OscilInt}
\ba
G_{\eps,n}(t)&\chi(x,-i\eps\d_x)\psi_\eps^{in}(x)
\\
&=
\iint\!\!\!\iint A_n(t,y,\eta,\eps)a^{in}(z)\chi_2(\zeta)e^{i(\vphi(t,x,y,\eta)+\zeta\cdot(y-z)+S^{in}(z))/\eps}\frac{dzd\zeta dyd\eta}{(2\pi\eps)^{2N}}
\ea
\ee
with the stationary phase method.

Choose $\vphi$ of the form\footnote{For each $v\in\bR^N$, the tensor $v\otimes v$ is identified with the matrix with entries $v_iv_j$, where $v_i$ is the $i$th component of the vector $v$ in the canonical basis of $\bR^N$. For $A,B\in M_N(\bR)$,
the notation $A:B$ designates $\Tr(A^TB)$.}
$$
\vphi(t,x,y,\eta)=S(t,y,\eta)+(x-X_t(y,\eta))\cdot\Xi_t(y,\eta)+iB:(x-X_t(y,\eta))^{\otimes 2}
$$
where the matrix $B=B^T>0$ is constant (see formula (2.7) \cite{LapSig} and the following Remark 2.1). Critical points of the phase in the oscillating integral (\ref{OscilInt}) are defined by the system of equations\footnote{If $f\in C^1(\bR^N;\bR^N)$, and if $f_i(x)$ designates the $i$th component of $f(x)$ in the canonical basis of $\bR^N$, the notation $Df(x)$ designates the Jacobian matrix of $f$ at the point $x$, i.e. the matrix whose entry at the $i$th row and the $j$th column is the partial
derivative $\d_{x_j}f_i(x)$.}
$$
\left\{
\ba
{}&-\zeta+DS^{in}(z)=0\,,
\\
&y-z=0\,,
\\
&\d_yS(t,x,y)-D_yX_t(y,\eta)^T\cdot\Xi_t(y,\eta)+D_y\Xi_t(y,\eta)^T\cdot(x-X_t(y,\eta))
\\
&-iB:(x-X_t(y,\eta))\otimes D_yX_t(y,\eta)+\zeta=0\,,
\\
&\d_\eta S(t,x,y)-D_\eta X_t(y,\eta)^T\cdot\Xi_t(y,\eta)+D_\eta\Xi_t(y,\eta)^T\cdot(x-X_t(y,\eta))
\\
&-iB:(x-X_t(y,\eta))\otimes D_\eta X_t(y,\eta)=0\,.
\ea
\right.
$$
At this point, we recall formulas (3.1-2) from \cite{LapSig}
\be\lb{SySeta}
\ba
{}&\d_yS(t,y,\eta)=D_yX_t(y,\eta)^T\cdot\Xi_t(y,\eta)-\eta\,,
\\
&\d_\eta S(t,y,\eta)=D_\eta X_t(y,\eta)^T\cdot\Xi_t(y,\eta)\,,
\ea
\ee
together with the following definitions 
$$
\ba
Y(t,y,\eta):=D_y\Xi_t(y,\eta)-iBD_y X_t(y,\eta)\,,
\\
Z(t,y,\eta):=D_\eta\Xi_t(y,\eta)-iBD_\eta X_t(y,\eta)\,.
\ea
$$
Thus the critical points of the phase in (\ref{OscilInt}) are given by
$$
\left\{
\ba
{}&\zeta=DS^{in}(z)\,,
\\
&y=z\,,
\\
&(x-X_t(y,\eta))^TY(t,y,\eta)+\zeta=\eta\,,
\\
&(x-X_t(y,\eta))^TZ(t,y,\eta)=0\,.
\ea
\right.
$$
Since the matrix $Z$ is invertible by Lemma 4.1 of \cite{LapSig}, we conclude that the system of equations above is equivalent to
$$
\left\{
\ba
{}&\zeta=DS^{in}(z)\,,
\\
&y=z\,,
\\
&\zeta=\eta\,,
\\
&x=X_t(y,\eta)\,.
\ea
\right.
$$
In other words, 
$$
F_t(z)=x\,,\quad y=z\,,\quad\zeta=\eta=DS^{in}(z)\,.
$$
Assuming that $(t,x)\notin C$, we apply Proposition \ref{P-Smooth} and conclude that the set of critical points of the phase in (\ref{OscilInt}) is of the form
$$
\left\{
\ba
{}&y=z=y_j(t,x)\,,
\\
&\zeta=\eta=DS^{in}(y_j(t,x))\,,
\ea
\right.
\qquad\qquad j=1,\ldots,\cN(t,x)\,.
$$
At this point, we apply the stationary phase method (Theorem 7.7.5 in \cite{Hor1}). First we need to compute the Hessian of the phase in (\ref{OscilInt}) 
at its critical points. One finds
$$
H_j(t,x):=\left(\begin{matrix}
D^2S^{in}\,\,&\,\,-I\,\,&\,\,0\,\,&0\,\,\\
-I&0&+I&0\\
0&+I&-Y^TD_yX_t&-Y^TD_\eta X_t-I\\
0&0&-Z^TD_yX_t&-Z^TD_\eta X_t
\end{matrix}\right)_{y=y_j(t,x)\atop \eta=DS^{in}(y_j(t,x))}
$$
and it remains to compute $\Det(H_j(t,x))$. Adding the first row of $H_j(t,x)$ to the third row, one finds that
$$
\ba
\Det H_j(t,x)&=
\left|\begin{matrix}
D^2S^{in}\,\,&\,\,-I\,\,&\,\,0\,\,&0\,\,\\
-I&0&+I&0\\
D^2S^{in}&0&-Y^TD_yX_t&-Y^TD_\eta X_t-I\\
0&0&-Z^TD_yX_t&-Z^TD_\eta X_t
\end{matrix}\right|_{y=y_j(t,x)\atop \eta=DS^{in}(y_j(t,x))}
\\
&=\left|\begin{matrix}
-I&+I&0\\
D^2S^{in}&-Y^TD_yX_t&-Y^TD_\eta X_t-I\\
0&-Z^TD_yX_t&-Z^TD_\eta X_t
\end{matrix}\right|_{y=y_j(t,x)\atop \eta=DS^{in}(y_j(t,x))}
\\
&=\left|\begin{matrix}
-I&0&0\\
D^2S^{in}&-Y^TD_yX_t+D^2S^{in}&-Y^TD_\eta X_t-I\\
0&-Z^TD_yX_t&-Z^TD_\eta X_t
\end{matrix}\right|_{y=y_j(t,x)\atop \eta=DS^{in}(y_j(t,x))}
\ea
$$
where the last equality follows from adding the first column in the right hand side of the second equality to the second column. Eventually, one finds that
$$
\ba
\Det H_j(t,x)
=(-1)^N\left|\begin{matrix}
-Y^TD_yX_t+D^2S^{in}&-Y^TD_\eta X_t-I\\
-Z^TD_yX_t&-Z^TD_\eta X_t
\end{matrix}\right|_{y=y_j(t,x)\atop \eta=DS^{in}(y_j(t,x))}
\ea
$$
which is computed as follows. First
$$
\left|\begin{matrix}
I&-(Z^{-1}Y)^T\\
0&I
\end{matrix}\right|
\left|\begin{matrix}
-Y^TD_yX_t+D^2S^{in}&-Y^TD_\eta X_t-I\\
-Z^TD_yX_t&-Z^TD_\eta X_t
\end{matrix}\right|
=
\left|\begin{matrix}
D^2S^{in}&-I\\
-Z^TD_yX_t&-Z^TD_\eta X_t
\end{matrix}\right|
$$
so that
$$
\Det H_j(t,x)
=
(-1)^N\left|\begin{matrix}
D^2S^{in}&-I\\
-Z^TD_yX_t&-Z^TD_\eta X_t
\end{matrix}\right|_{y=y_j(t,x)\atop \eta=DS^{in}(y_j(t,x))}
$$
On the other hand
\be\lb{SchurForm}
\ba
\left|\begin{matrix}
D^2S^{in}&-I\\
-Z^TD_yX_t&-Z^TD_\eta X_t
\end{matrix}\right|
&=
(-1)^N\Det(Z^TD_yX_t+Z^TD_\eta X_tD^2S^{in})
\\
&=(-1)^N\Det(Z)\Det(D_yX_t+D_\eta X_tD^2S^{in})
\ea
\ee
by the following elementary lemma (that is a variant of the Schur complement formula in a special case: see for instance Proposition 3.9 on pp. 40-41 in \cite{SerreMat}).

\begin{Lem}\lb{L-detABCD}
Let $A,B,C,D\in M_N(\bC)$. If $AB=BA$, one has
$$
\left|\begin{matrix}
A&B\\
C&D\end{matrix}\right|
=\Det(DA-CB)\,.
$$
\end{Lem}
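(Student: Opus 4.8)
The plan is to prove the identity $\left|\begin{smallmatrix} A&B\\ C&D\end{smallmatrix}\right| = \Det(DA - CB)$ under the hypothesis $AB = BA$, by reducing to the classical block-determinant formula in the case where the top-left block is invertible, and then removing the invertibility hypothesis by a density/continuity argument.

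First I would treat the generic case in which $A$ is invertible. Then one performs a block column operation: multiplying the $2N\times 2N$ matrix on the right by $\left(\begin{smallmatrix} I & -A^{-1}B\\ 0 & I\end{smallmatrix}\right)$, whose determinant is $1$, turns $\left(\begin{smallmatrix} A&B\\ C&D\end{smallmatrix}\right)$ into $\left(\begin{smallmatrix} A & 0\\ C & D - CA^{-1}B\end{smallmatrix}\right)$, which is block lower triangular; hence its determinant is $\Det(A)\Det(D - CA^{-1}B) = \Det\big(A(D - CA^{-1}B)\big)$. Now I use the commutation hypothesis: since $AB = BA$, we have $A^{-1}B = BA^{-1}$, so $A(D - CA^{-1}B) = AD - ACBA^{-1} = AD - ACA^{-1}B$. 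This is not yet of the desired form, so instead I would write $A\cdot CA^{-1}B$ differently: from $AB=BA$ one gets $AC A^{-1} B$... The cleaner route is to factor on the left instead. Multiplying on the left by $\left(\begin{smallmatrix} I & 0\\ -CA^{-1} & I\end{smallmatrix}\right)$ gives $\left(\begin{smallmatrix} A & B\\ 0 & D - CA^{-1}B\end{smallmatrix}\right)$, so the determinant is $\Det(A)\Det(D - CA^{-1}B) = \Det(AD - ACA^{-1}B)$, and using $A^{-1}B = BA^{-1}$ this equals $\Det(AD - ACBA^{-1})$. Since $\Det(AD - ACBA^{-1}) = \Det\big((AD - ACBA^{-1})\big)$ and conjugation by $A$ (i.e. $M\mapsto A^{-1}MA$) preserves determinants, $\Det(AD - ACBA^{-1}) = \Det(A^{-1}(AD - ACBA^{-1})A) = \Det(DA - CBA^{-1}\cdot A)$... here I must be careful that $A$ commutes past $D$; it does not in general. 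The correct bookkeeping: $\Det(AD - ACBA^{-1}) = \Det\big(A(DA - CB)A^{-1}\big)\cdot$ requires $AD = A(DA)A^{-1}$, i.e. $AD A = A D A$, trivially true, wait: $A(DA)A^{-1} = AD$ and $A(CB)A^{-1} = ACBA^{-1}$, so indeed $AD - ACBA^{-1} = A(DA - CB)A^{-1}$, whence $\Det(AD - ACBA^{-1}) = \Det(DA - CB)$. This settles the case $A$ invertible, using $AB = BA$ only at the step $A^{-1}B = BA^{-1}$, which is where the hypothesis is essential.

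Next I would remove the invertibility assumption. The map $(A,B,C,D)\mapsto \left|\begin{smallmatrix} A&B\\ C&D\end{smallmatrix}\right| - \Det(DA-CB)$ is a polynomial in the entries, hence continuous on $M_N(\bC)^4$. For fixed $B,C,D$ with $AB = BA$, consider $A_\de := A + \de I$ for $\de\in\bC$; since $I$ commutes with everything, $A_\de B = B A_\de$ still holds, and $A_\de$ is invertible for all but finitely many $\de$ (those $-\de$ not eigenvalues of $A$). For such $\de$ the identity holds by the previous paragraph; letting $\de\to 0$ along a path avoiding the finitely many bad values and invoking continuity yields the identity for $A$ itself. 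This completes the proof.

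I do not expect a serious obstacle here; the only point demanding care is the order of the non-commuting factors in the chain of determinant manipulations — it is easy to write $\Det(DA - CB)$ when one has actually produced $\Det(AD - CB)$ or a conjugate thereof, and the commutation hypothesis $AB = BA$ must be invoked at exactly the right place (to slide $A^{-1}$ past $B$) and nowhere illegitimately. A useful sanity check is the scalar-block case $N=1$, where everything commutes and the formula is the familiar $AD - CB$, and the case $B = 0$, where both sides are visibly $\Det(A)\Det(D) = \Det(DA)$.
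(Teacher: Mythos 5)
Your proof is correct and takes essentially the same route as the paper: establish the identity for invertible $A$ by a block-matrix manipulation (you do a Schur-complement reduction followed by a conjugation by $A$; the paper instead right-multiplies by $\left(\begin{smallmatrix} A^{-1} & B\\ 0 & -A\end{smallmatrix}\right)$, which lands directly on $\Det(CB-DA)$ without the conjugation detour), then extend by density of invertible matrices. Your density step is in fact slightly more careful than the paper's — by perturbing along $A+\delta I$ you explicitly preserve the hypothesis $AB=BA$, whereas the paper simply cites density of $GL_N(\bC)$ in $M_N(\bC)$, leaving implicit that one must approximate within the commutant of $B$.
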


\smallskip
Therefore
\be\lb{FlaHess}
\ba
\Det H_j(t,x)&=\Det(Z)\Det(D_yX_t+D_\eta X_tD^2S^{in})\Big|_{y=y_j(t,x),\,\eta=DS^{in}(y_j(t,x))}
\\
&=\Det(Z(y_j(t,x),DS^{in}(y_j(t,x)))\Det(DF_t(y_j(t,x)))
\ea
\ee
where $F_t$ is defined in (\ref{DefF}).

Pick a nonempty closed ball $\bbb\subset\bR\times\bR^N\setminus C$, let $\cN_\bbb=\cN(t,x)$ for all $(t,x)\in\bbb$, and let
$$
K_j=\{(y_j(t,x),\grad S^{in}(y_j(t,x)))\,|\,(t,x)\in\bbb\}\,,\qquad j=1,\ldots,\cN_\bbb\,.
$$
Assuming that $\bbb$ is of small enough radius, $K_j\cap K_k=\varnothing$ for $j\not=k\in\{1,\ldots,\cN_\bbb\}$. Let $\ka_j\in C^\infty_c(\bR^{2N})$ for all $j=1,\ldots,\cN_\bbb$, such that
$$
\left\{
\ba
\ka_j\ge 0\hbox{ and }\ka_j\rstr_{K_j}=1\,,\quad j=1,\ldots,\cN_\bbb\,,
\\
\hbox{while }\ka_j\ka_k=0\hbox{ for }j\not=k\in\{1,\ldots,\cN_\bbb\}\,,
\ea
\right.
$$
Applying Theorem 7.7.1 in \cite{Hor1} shows that
\be\lb{Step2}
\ba
\sup_{(t,x)\in \bbb}\left|\iint\!\!\!\iint A_n(t,y,\eta,\eps)a^{in}(z)\chi_2(\zeta)e^{i(\vphi(t,x,y,\eta)+\zeta\cdot(y-z)+S^{in}(z))/\eps}
	\frac{dzd\zeta dyd\eta}{(2\pi\eps)^{2N}}\right.
\\
-\sum_{j=1}^{\cN_\bbb}\iint\!\!\!\iint A_n(t,y,\eta,\eps)a^{in}(z)\chi_2(\zeta)\ka_j(y,\eta)\ka_j(z,\zeta)
\\
\left. e^{i(\vphi(t,x,y,\eta)+\zeta\cdot(y-z)+S^{in}(z))/\eps}\frac{dzd\zeta dyd\eta}{(2\pi\eps)^{2N}}\right|=O(\eps)
\ea
\ee
as $\eps\to 0$. 

Next we set
$$
\ba
I_j(t,x,\eps):=\iint\!\!\!\iint A_n(t,y,\eta,\eps)a^{in}(z)\chi_2(\zeta)\ka_j(y,\eta)\ka_j(z,\zeta)
\\
\times e^{i(\vphi(t,x,y,\eta)+\zeta\cdot(y-z)+S^{in}(z))/\eps}\frac{dzd\zeta dyd\eta}{(2\pi\eps)^{2N}}
\ea
$$
for $j=1,\ldots,\cN_\bbb$. By Theorem 7.7.5 in \cite{Hor1}, we conclude that
\be\lb{Step3}
\ba
\sup_{(t,x)\in\bbb}|I_j(t,x,\eps)-A_0(t,y_j(t,x),\grad S^{in}(y_j(t,x)),0)a^{in}(y_j(t,x))\chi_2(\grad S^{in}(y_j(t,x)))
\\
\times e^{i(\vphi(t,x,y_j(t,x),\grad S^{in}(y_j(t,x)))+S^{in}(y_j(t,x)))/\eps}(\Det H_j(t,x))^{-1/2}|=O(\eps)
\ea
\ee
as $\eps\to 0$. Our choice of $\chi_2$ and $\vphi$ implies that $\chi_2(\grad S^{in}(y_j(t,x)))=1$ and
$$
\vphi(t,x,y_j(t,x),\grad S^{in}(y_j(t,x)))=S(t,y_j(t,x),\grad S^{in}(y_j(t,x)))
$$
so that
$$
(\vphi(t,x,y_j(t,x),\grad S^{in}(y_j(t,x)))+S^{in}(y_j(t,x)))=S_j(t,x)\,.
$$
By formula (2.13) in \cite{LapSig}
\begin{equation}\label{root}
A_0(t,y_j(t,x),\grad S^{in}(y_j(t,x)),0)=\sqrt[cont]{\Det(Z(t,y_j(t,x),DS^{in}(y_j(t,x)))}\,,
\end{equation}
where the notation $\sqrt[cont]{z}$ designates  the analytic continuation of the square-root along the path $s\mapsto\Det(Z(s,y_j(t,x),DS^{in}(y_j(t,x))$. This analytic continuation is uniqueley defined since $\Det(Z(s,y_j(t,x),DS^{in}(y_j(t,x)))\not=0$ 
for all $s$: see for instance section 1.3 in chapter 8 of \cite{Ahlfors}. According to \eqref{root}, (\ref{FlaHess}), Lemma 5.1 and formula (5.15) in \cite{LapSig}, we have
\be\lb{Step4}
\ba
A_0(t,y_j(t,x),\grad S^{in}(y_j(t,x)),0)(\Det H_j(t,x))^{-1/2}
\\
=|\Det(DF_t(y_j(t,x)))|^{-1/2}e^{i\pi\nu_j(t,x)/2}=J(y_j(t,x))^{-1/2}e^{-i\pi\cM_j(t,x)/2}\,,
\ea
\ee
where $\cM_j(t,x)$ is an integer.

Putting together (\ref{Step1})-(\ref{Step2})-(\ref{Step3})-(\ref{Step4}) concludes the proof of Theorem \ref{T-WKBV}.
\end{proof}

\begin{proof}[Proof of Lemma \ref{L-detABCD}]
If $A$ is nonsingular and $AB=BA$, one has
$$
\ba
\left|\begin{matrix}
A&B\\
C&D\end{matrix}\right|
=
(-1)^N\left|\begin{matrix}
A&B\\
C&D\end{matrix}\right|
\left|\begin{matrix}
A^{-1}&B\\
0&-A\end{matrix}\right|
&=
(-1)^N\left|\begin{matrix}
I&0\\
CA^{-1}&CB-DA\end{matrix}\right|
\\
&=(-1)^N\Det(CB-DA)=\Det(DA-CB)\,.
\ea
$$
Since both sides of the identity above are continuous functions of $A$ and the set of nonsingular matrices $GL_N(\bC)$ is dense in $M_N(\bC)$, this identity holds for all $A\in M_n(\bC)$ such that $AB=BA$.
\end{proof}

\smallskip
In the case of a linear phase function $S^{in}$ treated in section 5 of \cite{LapSig}, one has $\grad^2S^{in}\equiv 0$. Therefore, the determinant in (\ref{SchurForm}) reduces to 
$$
\left|\begin{matrix}
0&-I\\
-Z^TD_yX_t&-Z^TD_\eta X_t
\end{matrix}\right|
$$
which can be explicitly computed without difficulty, since this determinant is blockwise triangular. In particular, the simpler situation considered in  section 5 of \cite{LapSig} does not require using Lemma \ref{L-detABCD}.


\section{Maslov-Index mit menschlichem Antlitz}


The purpose of this section is to explain, in the simplest possible manner, how the integers $\cM_j(t,x)$ appearing in formulas (\ref{WKBLim}) and (\ref{WKBSol}) are related to the Maslov index.

\subsection{Generalities on the Maslov index} 


There are several notions of Maslov index in the literature. The original definition can be found in Maslov's treatise \cite{Masl} or \cite{ArnoldMaslov}. Closely related indices have been subsequently defined by Leray (in \S 2 of\cite{Leray}) 
and H\"ormander (in chapter XXI of \cite{Hor3}) --- see also chapters I.7 and IV.3 in \cite{GuillStern}, and especially \cite{Souriau} for a lucid presentation of all these notions of Maslov index and how they are related. The present section 
recalls some material from Arnold's short and precise presentation \cite{ArnoldMaslov} of the subject.

The phase space $\bR_x^N\times\bR_\xi^N$ is endowed with the standard symplectic $2$-form $\si:=d\xi_1\wedge dx_1+\ldots+d\xi_N\wedge dx_N$. A linear subspace $\l$ of $\bR_x^N\times\bR_\xi^N$ is called Lagrangian if and 
only if $\Dim\l=N$ and $\si(u,v)=0$ for all $u,v\in\l$. An example of Lagrangian subspace of $\bR_x^N\times\bR_\xi^N$ is $T^*_0:=\{0\}\times\bR^N$. The Lagrangian Grassmanian $\L(N)$ is the set of Lagrangian subspaces of 
$\bR_x^N\times\bR_\xi^N$. For each $k=0,\ldots,N$ define $\L^k(N):=\{\l\in\L(N)\,|\,\Dim(\l\cap T^*_0)=k\}$. A linear subspace $\l$ of $\bR_x^N\times\bR_\xi^N$ belongs to $\L^0(N)$ iff $\l$ is defined by an equation of the form $\xi=Ax$ 
with $A=A^T\in M_N(\bR)$. The Lagrangian Grassmanian $\L(N)$ is a $C^\infty$ manifold of dimension $\tfrac12N(N+1)$ (Corollary 3.1.4 in \cite{ArnoldMaslov}), and $\L^k(N)$ is a submanifold of codimension $\tfrac12k(k+1)$ in 
$\L(N)$ (Lemma 3.2.1 in \cite{ArnoldMaslov}). An important subset of the Lagrangian Grassmanian is the \textit{Maslov cycle} $\cM:=\L(N)\setminus\L^0(N)=\L^1(N)\cup\ldots\cup\L^N(N)$; it has codimension $1$ in $\L(N)$ (see section 
3.2, especially Corollary 3.2.2 in \cite{ArnoldMaslov} for a proof that the homological boundary of $\cM$ is $0$). 

Let us define an orientation on $\cM$. For each $\th\in\bR$ and each $(x,\xi)\in\bR_x^N\times\bR_\xi^N$, define $R[\th](x,\xi)=(x\cos\th+\xi\sin\th,\xi\cos\th-x\sin\th)$. Since $R[\th]$ is the Hamiltonian flow of $H(x,\xi):=\tfrac12(|x|^2+|\xi|^2)$,
it defines a symplectomorphism of the phase space $\bR_x^N\times\bR_\xi^N$. In particular $R[\th]\l\in\L(N)$ for each $\l\in\L(N)$ and each $\th\in\bR$. The Maslov cycle is oriented by the following prescription: let $\l\in\L^1(N)$; then the 
path $\th\mapsto R[\th]\l$ crosses $\cM$ exactly once for $\th$ near $0$, at $\l$ for $\th=0$, and does so from the negative side of $\cM$ to the positive side of $\cM$, as $\th$ increases in $(-\eta,\eta)$ for $\eta>0$ small enough (section 3.5, 
especially Lemmas 3.5.1,3.5.2 and 3.5.3 in \cite{ArnoldMaslov}).

Let now $[0,t]\ni s\mapsto\l(s)\in\L(N)$ be a $C^1$ path such that $\l(0),\l(t)\in\L^0(N)$ and such that $\{\l(s)\,|\,0<s<t\}\cap\cM\subset\L^1(N)$ with transverse intersection. The Maslov index $\mu(\l)$ of the path $\l$ is its intersection index 
with the Maslov cycle $\cM$ oriented as above. In other words
$$
\mu(\l)=\sum_{\l(s)\in\cM}\Sign(s)\,,
$$
where $\Sign(s)=+1$ if $\l(s+t)$ crosses $\cM$ from the negative to the positive side of $\cM$ as $t$ increases near $0$, and $\Sign(s)=-1$ if $\l(s+t)$ crosses $\cM$ from the positive to the negative side of $\cM$ as $t$ increases near 
$0$ (see section 2.2 and Definition 3.6.1 in \cite{ArnoldMaslov}).

There exists an alternate definition of the Maslov index for closed paths in $\L(N)$. Identifying $\bR_x^N\times\bR_\xi^N$ with $\bC^N=\bR_\xi^N+i\bR_x^N$, we recall that the unitary group $U(N)$ acts transitively on $\L(N)$ (Lemma 
1.2 in \cite{ArnoldMaslov}). Thus, for each $\l\in\L(N)$, there exists $u\in U(N)$ such that $\l=uT^*_0$; besides, if $u,u'\in U(N)$ and $uT^*_0=u'T^*_0$, then $uu^T=u'(u')^T$. In other words, one can identify $\l$ with $uu^T$ where $u$ 
is any element of $U(N)$ such that $uT^*_0=\l$. This defines a map $\Det^2:\,\L(N)\to\bS^1$ by $\Det^2(\l):=\Det(u)^2$ where $uT^*_0=\l$. Let now $\bS^1\ni s\mapsto\l(s)\in\L(N)$ be a closed continuous path; the Maslov index $\mu(\l)$ 
of $\l$ is defined as the winding number of the composed map $\Det^2\circ\l:\,\bS^1\to\bS^1$, i.e.
$$
\mu(\l)=\hbox{degree}(\Det^2\circ\l)\,.
$$
(See section 1.5 in \cite{ArnoldMaslov}). If $\l$ is a $C^1$ closed path on $\L(N)$ intersecting $\cM$ transversally on $\L^1(N)$, both definitions of the Maslov index coincide (Theorem 1.5 in \cite{ArnoldMaslov}).

\subsection{The Maslov index and Hamiltonians of the form $\tfrac12|\xi|^2+V(x)$.}


The orientation of the Maslov cycle $\cM$ is obviously crucial in the definition of the Maslov index recalled above. For that reason, computing the Maslov index of a path is in general a rather complicated task. However, when the path
is defined by the linearized Hamiltonian flow of a Hamiltonian that is convex in the momentum variable, this computation is considerably simplified. Indeed, in that case, the orientation of the Maslov cycle plays no role as such a path
always crosses the Maslov cycle in the same direction. Therefore, computing the Maslov index of such a path reduces to counting how many times it intersects the Maslov cycle. In other words, the Maslov index reduces to the more
classical notion of Morse index (see for instance \cite{Milnor} \S 15, especially Theorem 15.1) in this case. This observation can be found in \cite{Masl}, first without  proof on p. 151, and as the result of a rather lengthy argument on 
p. 297. See also Theorem 5.2, given without proof in \cite{ArnoldMaslov}. The lemma below gives a short proof of this fact in the special case of a Hamiltonian of the form $\tfrac12|\xi|^2+V(x)$, which is all that we need in the context 
of the classical limit of the Schr\"odinger equation. 

Let $W\in C_b(\bR;M_N(\bR))$ such that $W(t)=W(t)^T$ for each $t\in\bR$, and let $t\mapsto S(t,t_0)\in M_{2N}(\bR)$ be the solution of the Cauchy problem
$$
\frac{d}{dt}S(t,t_0)=\left(\begin{matrix} 0&I\\ W(t)&0\end{matrix}\right)S(t,t_0)\,,\qquad S(t_0,t_0)=I\,.
$$
(The linearized Hamiltonian system defined by the Hamiltonian $\tfrac12|\xi|^2+V(x)$ and the symplectic form $d\xi_1\wedge dx_1+\ldots+d\xi_N\wedge dx_N$ is exactly of this form, with $W=-\grad^2V$.) The matrix $S(t,t_0)$ is 
symplectic because 
$$
\left(\begin{matrix} 0&-I\\ I&0\end{matrix}\right)\left(\begin{matrix} 0&W(t)\\ I&0\end{matrix}\right)+\left(\begin{matrix} 0&I\\ W(t)&0\end{matrix}\right)\left(\begin{matrix} 0&-I\\ I&0\end{matrix}\right)=0\,.
$$
In particular $S(t,t_0)\l\in\L(N)$ whenever $\l\in\L(N)$. 

\begin{Lem}\lb{L-Fond}
Let $\l_0\in\L^0(N)$ and set $\l(t)=S(t,t_0)\l_0$. If $\l(t_1)=S(t_1,t_0)\l_0\in\L^1(N)$, the path $t\mapsto\l(t)$ intersects the Maslov cycle $\cM$ transversally at $\l(t_1)$ from the negative side to the positive side of the Maslov cycle as 
$t$ increases near $t_1$.
\end{Lem}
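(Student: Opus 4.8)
The plan is to reduce the statement to an explicit computation in local coordinates on the Lagrangian Grassmannian near the crossing, exploiting the special block form of the generator
$\left(\begin{matrix}0&I\\W(t)&0\end{matrix}\right)$.
First I would work in a neighbourhood of $\l(t_1)$ and split $\bR^N=E\oplus E^\perp$, where $E=\l(t_1)\cap T^*_0$ is the one-dimensional intersection space. Since $\l(t_1)\in\L^1(N)$, its projection onto the $x$-plane along $\xi$ has a one-dimensional kernel; choosing a basis adapted to $E$, the Lagrangian $\l(t)$ for $t$ near $t_1$ can be written as the graph of a symmetric matrix-valued function in suitable symplectic coordinates (rotating so that $\l(t_1)$ becomes transverse to $T^*_0$, or — more directly — using the standard chart in which nearby Lagrangians are graphs $\xi=A(t)x$ away from the crossing and the crossing itself is detected by a single coordinate of $A(t)^{-1}$ blowing up). Concretely, if $\l(t)$ is spanned by the columns of $\left(\begin{matrix}X(t)\\ \Xi(t)\end{matrix}\right)$ with $\dot X=\Xi$, $\dot\Xi=W(t)X$, then $\l(t)\in\L^0(N)$ iff $\Det X(t)\neq 0$, and on $\L^0(N)$ one has $\l(t):\xi=A(t)x$ with $A(t)=\Xi(t)X(t)^{-1}$.

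Next I would compute the Riccati evolution of $A(t)$: differentiating $A=\Xi X^{-1}$ and using the equations of motion gives
$$
\dot A(t)=W(t)-A(t)^2\,.
$$
This is the key identity. Near the crossing time $t_1$, $X(t)$ is singular, so $A(t)$ is not defined; instead I would look at the one-dimensional direction in which things degenerate. Pick a unit vector $e\in E$ (so $(0,e)\in\l(t_1)$), complete it to an orthonormal basis, and consider the restriction of the quadratic form associated with $A(t)$ — or, better, parametrize $\l(t)$ near $t_1$ in the dual chart $x=\tilde A(t)\xi+\dots$ appropriate to crossing $T^*_0$. The transversality of the crossing and its \emph{signed} direction are governed by the sign of a single scalar, namely the "velocity" with which the degenerate eigenvalue passes through the relevant value; using $\dot A=W-A^2$ and the fact that along the degenerate direction the $-A^2$ term is controlled, this velocity is seen to be $\langle e,\dot X(t_1)^{\!*}\,(\text{something positive})\rangle$, which reduces — after unwinding — to a strictly positive quantity. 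The positivity comes precisely from $\dot X=\Xi$ together with the structure of the symplectic form: the crossing form of the path $\l$ with $\cM$ at $t_1$, in the sense of Robbin–Salamon, is represented on $E$ by the quadratic form $v\mapsto |v|^2$ (it picks up the "$I$" in the upper right block of the generator, with no contribution from $W$), hence positive definite.

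Then I would match this with Arnold's orientation convention for $\cM$ recalled just above: the reference path $\th\mapsto R[\th]\l_1$ crosses $\cM$ from the negative to the positive side with positive speed, and one checks its crossing form at $\th=0$ is also positive definite on $E$ (again only the rotational "$I$"-part contributes). Since both crossing forms are positive definite on the same line $E$, the path $\l(t)$ crosses $\cM$ at $\l(t_1)$ in the same direction as the reference rotation, i.e. from the negative to the positive side; positive-definiteness of the crossing form is exactly transversality of the intersection. This gives the claim. I expect the main obstacle to be purely bookkeeping: setting up the local chart so that "crossing $\L^1(N)$" is expressed cleanly as one scalar function changing sign, and then verifying that in that chart the derivative of that scalar at $t_1$ coincides (up to a positive factor) with the scalar controlling the reference path's crossing — in other words, showing that the $W(t)$-dependent terms genuinely drop out of the leading-order crossing behaviour, leaving only the convex kinetic term $\tfrac12|\xi|^2$, whose linearization contributes the positive-definite "$I$". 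Once that reduction is in place, the sign is forced and the proof is short.
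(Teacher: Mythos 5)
Your proposal is correct and follows essentially the same strategy as the paper: reduce to a local computation near the crossing, observe that only the identity block in the generator $\left(\begin{smallmatrix}0&I\\W&0\end{smallmatrix}\right)$ contributes at leading order (so the sign is forced by the kinetic term and $W$ drops out), and then identify the crossing direction by comparing with the reference rotation $R[\theta]$. The only cosmetic difference is that you phrase the key scalar as a Robbin--Salamon crossing form while the paper does the equivalent bookkeeping explicitly, rotating the $(q_1,p_1)$-plane so that the crossing is detected by the $(1,1)$-entry of a symmetric matrix $L(\tau)$ and computing $\dot L_{11}(t_1)=-1-|I'Le_1|^2$ by a first-order Taylor expansion of $JS(t_1+\tau,t_1)J^T$.
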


\begin{proof}
Choose a system of orthonormal coordinates $p_1,\ldots,p_N$ in $\bR^N_\xi$ such that the line $\l(t_1)\cap T_0^*$ is transverse to the hyperplane of equation $p_1=0$ in $T_0^*$. Choose orthonormal coordinates $q_1,\ldots,q_N$ 
in $\bR^N_x$ that are conjugate to $p_1,\ldots,p_N$, i.e. such that the symplectic form $\si=dp_1\wedge dq_1+\ldots+dp_N\wedge dq_N$. The change of coordinates takes the form
$$
\left(\begin{matrix} R&0\\ 0&R\end{matrix}\right)
$$
where $R\in O_N(\bR)$. In these new coordinates, the differential equation defining $S(t,t_0)$ keeps the same form, up to replacing $W(t)$ with $RW(t)R^T$. For simplicity, we keep the same notation for $S(t,t_0)$ and $W(t)$ in these
new variables.

Define
$$
\hat p_1=q_1\,,\quad\hat q_1=-p_1\,,\quad \hat p_j=p_j\hbox{ and }\hat q_j=q_j\quad\hbox{ for }j=2,\ldots,N\,.
$$
In these coordinates, any Lagrangian space $\l'$ such that $T_0^*\cap\l'$ is a line transverse to the hyperplane of equation $p_1=0$ in $T_0^*$ is defined by an equation of the form $\hat p=L'\hat q$ with $L'=(L')^T\in M_N(\bR)$ such 
that $L'_{11}=0$. 

Denote $I_1:=\Diag(1,0,\ldots,0)\in M_N(\bR)$ and $I':=I-I_1$, and set
$$
J:=\left(\begin{matrix} I'&I_1\\ -I_1&I'\end{matrix}\right)
$$
so that $J(q,p)^T=(\hat q,\hat p)^T$. Straightforward computations show that
$$
JS(t_1+\tau,t_1)J^T=\left(\begin{matrix}I\,-\,\tau I_1W(t_1)I'&\tau I'\!+\!\tau I_1W(t_1)I_1\\	\\ -\tau I'W(t_1)I'\!-\!\tau I_1&I+\tau I'W(t_1)I_1\end{matrix}\right)+O(\tau)^2\,.
$$

Let $L=L^T\in M_N(\bR)$ be such that the Lagrangian subspace $\l(t_1)$ has equation $\hat p=L\hat q$. For $|\tau|\ll 1$, the Lagrangian subspace $\l(t_1+\tau)$ has equation $\hat p= L(\tau)\hat q$, where
$$
\ba
L(\tau)=&(-\tau I'W(t_1)I'\!-\!\tau I_1+(I+\tau I'W(t_1)I_1)L)
\\
&\times(I\,-\,\tau I_1W(t_1)I'+(\tau I'\!-\!\tau I_1W(t_1)I_1)L)^{-1}+O(\tau^2)
\ea
$$
so that, in the limit as $\tau\to 0$, one has
$$
\ba
\frac1{\tau}(L(\tau)-L)\to &-I'W(t_1)I'-I_1+I'W(t_1)I_1L\\&+LI_1W(t_1)I'-LI'L-LI_1W(t_1)I_1L\,.
\ea
$$
Assume that $\l(t_1)\in\L^1(N)$ and that $T^*_0\cap\l(t_1)$ is transverse to the hyperplane of equation $\hat p_1=0$ on $T^*_0$, so that $L_{11}=0$. Let $e_1$ be the first vector in the canonical basis of $\bR^N$; then, in the limit as 
$\tau\to 0$, one has
$$
\frac1\tau(e_1|(L(\tau)-L)e_1)=\frac1\tau(e_1|L(\tau)e_1)\to-1-|I'Le_1|^2\le -1\,.
$$

In the special case $W=-I$, one has $S(t,t_0)=R[t-t_0]$. Therefore, near $\l(t_1)$, the computation above shows that the positive side of $\L^1(N)$ consists of Lagrangian subspaces $\l'$ of equation $\hat p=L'\hat q$ with $L'_{11}<0$.

Besides, the computation above also shows that, for all $W$, the path $t\mapsto S(t,t_1)\l(t_1)$ crosses $\L^1(N)$ in the same direction as for $W=-I$, i.e. from the negative side to the positive side as $t$ increases near $t_1$.
\end{proof}

\subsection{The free case}


Let $A=A^T\in GL_N(\bR)$, and let $\l(0)$ be the Lagrangian subspace of equation $\xi=Ax$ in $\bR_x^N\times\bR_\xi^N$. Let $\Phi_t(x,\xi)=(x+t\xi,\xi)$ be the free flow defined on $\bR_x^N\times\bR_\xi^N$ for each $t\in\bR$, which is
the Hamiltonian flow of $H(x,\xi)=\tfrac12|\xi|^2$. Set $\l(s):=\Phi_s(\l(0))\in\L(N)$ for each $s\in\bR$. 

First assume that $A$ has $N$ distinct eigenvalues $\a_1>\ldots>\a_N$, and let $t>0$ be such that $I+tA$ is invertible. Consider the path $\l_t:\,[0,t]\ni s\mapsto\l(s)\in\L(N)$. Obviously $\l_t(s):=\{(x+sAx,Ax)\,|\,x\in\bR^N\}$. Hence 
$\l_t(s)\in\L^0(N)$ if $I+sA$ is invertible, and has equation $\xi=A(I+sA)^{-1}x$. 

If $I+sA$ is not invertible, then $\Ker(I+sA)$ has dimension $1$  since $A$ has simple eigenvalues, and therefore $\l_t(s)\cap T^*_0=\{0\}\times\Ker(I+sA)$ has dimension $1$. Thus the path $\l_t$ can only intersect the Maslov cycle $\cM$ 
on its regular part $\L^1(N)$. By Lemma \ref{L-Fond}, it always does so in the same direction, from the negative to the positive side of $\cM.$ Hence the Maslov index of the path $\l_t$ is
$$
\mu(\l_t)=\#\{\a_j\,|\,0<-1/\a_j<t\}=\#\{\a_j\,|\,1+t\a_j<0\}\,.
$$

\smallskip
Next we treat the general case, where $A$ may have multiple eigenvalues, still denoted $\a_1\ge\ldots\ge\a_N$ and counted with their multiplicities, and compute the Maslov index $\mu(\l_t)$ of the path $\l_t$ defined above.

Pick $A'=(A')^T$ near $A$ with distinct eigenvalues and such that $A'A=AA'$. Set $A(\tau)=(1-\tau)A+\tau A'$. Assume that $I+tA$ is invertible; by choosing $A'$ sufficiently close to $A$, one can assume that $I+tA(\tau)$ is invertible 
for each $\tau\in[0,1]$. For each $B=B^T\in\ M_N(\bR)$, denote by $\l[B]$ the Lagrangian subspace of equation $\xi=Bx$. Consider now the family indexed by $\tau\in[0,1]$ of closed paths $\g_\tau$ defined as follows
$$
\ba
\g_\tau(s)&=\Phi_{ts}\l[A]\,,&\quad\hbox{ for }0\le s\le 1\,,
\\
\g_\tau(s)&=\Phi_t\l[A(\tau(s-1))]\,,&\quad\hbox{ for }1<s<2\,,
\\
\g_\tau(s)&=\Phi_{t(3-s)}\l[A(\tau)]\,,&\quad\hbox{ for }2\le s\le 3\,,
\\
\g_\tau(s)&=\l[A(\tau(4-s))]\,,&\quad\hbox{ for }3<s<4\,.
\ea
$$
Letting $\tau\to 0$ shows that $\g_1$ is homotopic to $\l_t-\l_t$ (i.e. the path $\l_t$ followed by its opposite). By the homotopy invariance of the degree, using the definition of Maslov's index for closed continuous paths shows that 
$\mu(\g_\tau)=0$. Since $I+tA(\tau)$ is invertible for $\tau\in[0,1]$, the arcs of $\g_1$ corresponding to $s\in[1,2]$ and $s\in[3,4]$ never cross $\cM$. Therefore 
$$
\mu(\l_t)=-\mu(\g_1\rstr_{[2,3]})\,,
$$
from which we conclude that 
$$
\mu(\l_t)=\#\{\a_j\,|\,1+t\a_j<0\}\,.
$$

\subsection{The non free case}


Denote by $\Phi_t=(X_t,\Xi_t)$ the Hamiltonian flow of 
$$
H(x,\xi):=\tfrac12|\xi|^2+V(x)\,,
$$
let $(t,x)\notin C$, and let $\cL_0:=\{(y,\grad S^{in}(y))\hbox{ s.t. }y\in\bR^N\}$. Denote $\cL_s:=\Phi_s(\cL_0)$ for each $s\in\bR$. Set $\g(s):=\Phi_s(y_j(t,x),\grad S^{in}(y_j(t,x)))$ for each $s\in\bR$ (where $y_j$ has been defined in
Proposition \ref{P-Smooth}, and $\l_0:=T_{\g(0)}\cL_0$. Let $\l_s:=D\Phi_s(\g(0))\l_0$ for each $s\in\bR$; observe that $\g(s+s')=\Phi_{s'}(\g(s))$ and that $D\Phi_{s'}(\g(s))\l(s):=\l(s+s')$ by the chain rule. With this notation, the point
$X_s(y_j(t,x),\grad S^{in}(y_j(t,x)))$ belongs to $C_s$ if and only if $\l(s)\in\overline\L^1(N)$.

\smallskip
Assume that $\l(s)\in\L^1(N)$ whenever $\g(s)\in C_s$ for $0<s<t$. 

\smallskip
In that case, by Lemma \ref{L-Fond}, the path $s\mapsto\l(s)$ always crosses the Maslov cycle transversally from the negative to the positive side as $\g(s)\in C_s$. Therefore, the Maslov index of the path $[0,t]\ni s\mapsto\l(s)\in\L(N)$ 
is in this case
$$
\mu((\l(s))_{0\le s\le t})=\#\{s\in[0,t]\hbox{ s.t. }X_s(y_j(t,x),\grad S^{in}(y_j(t,x)))\in C_s\}\,.
$$
The set $\{s\in[0,t]\hbox{ s.t. }X_s(y_j(t,x),\grad S^{in}(y_j(t,x)))\in C_s\}$ is therefore finite and henceforth denoted by $0<s_1<s_2<\ldots<s_n<t$. For $k=1,\ldots,n$, there exists $\phi_k\not=0$ such that $\Ker(DF_{s_k}(y_j(t,x)))=\bR\phi_k$.

\smallskip
Consider on the other hand the matrix
$$
M(s):=\left(\begin{matrix}
-Y_{s}^TD_yX_s+D^2S^{in}&-Y_{s}^TD_\eta X_s-I\\
-Z_{s}^TD_yX_s&-Z_{s}^TD_\eta X_s
\end{matrix}\right)_{y=y_j(t,x)\atop \eta=DS^{in}(y_j(t,x))}
$$
where
$$
Y_{s}=D_y\Xi_s-iBD_yX_s\,,\quad\hbox{ and }Z_{s}=D_\eta\Xi_s-iBD_\eta X_s\,.
$$
Denote 
$$
M_1(s):=\left(\begin{matrix}
-D_y\Xi_s^TD_yX_s+D^2S^{in}&-D_y\Xi_s^TD_\eta X_s-I\\
-D_\eta\Xi_s^TD_yX_s&-D_\eta\Xi_s^TD_\eta X_s
\end{matrix}\right)_{y=y_j(t,x)\atop \eta=DS^{in}(y_j(t,x))}
$$
and
$$
M_2(s):=\left(\begin{matrix}D_yX_s^T\\ D_\eta X_s^T\end{matrix}\right)B(D_yX_s\,\,\,D_\eta X_s)\rstr_{y=y_j(t,x)\atop \eta=DS^{in}(y_j(t,x))}
$$
so that
$$
M_1(s)=M_1(s)^T=\Re(M(s))\,,\quad M_2(s)=M_2(s)^T=\Im(M(s,\th))\ge 0\,.
$$
Since $\Det(M(s))=(-1)^N\Det(Z_{s})\Det(DF_s(y_j(t,x))$ and $Z_{s}$ is invertible for each $s\in[0,t]$, one has $\Det(M(s))=0$ at $s=s_1<s_2<\ldots<s_n$ only in $[0,t]$. Besides, 
$$
\ba
M(s_k)(\phi\oplus\psi)=0&\Rightarrow\psi=D^2S^{in}(y_j(t,x))\phi\hbox{ and }DF_{s_k}(y_j(t,x))\phi=0
\\
&\Rightarrow M_1(s_k)(\phi\oplus\psi)=M_2(s_k)(\phi\oplus\psi)=0\,,
\ea
$$
the converse being obvious. Hence
$$
\Ker(M(s_k))=\bC(\phi_k\oplus D^2S^{in}(y_j(t,x))\phi_k)\,,
$$
while
$$
\Ker(M_1(s_k))\cap\Ker(M_2(s_k))=\bR(\phi_k\oplus D^2S^{in}(y_j(t,x))\phi_k)\,.
$$
Denote $V_k:=\bR(\phi_k\oplus D^2S^{in}(y_j(t,x))\phi_k)$; since $M_j(s_k)$ is a real symmetric matrix and $M_j(s_k)(V_k)\subset V_k$ for $j=1,2$, one has $M_j(s_k)(V_k^\perp)\subset V_k^\perp$. Consider now the linear space
$W_k:=V_k^\perp\oplus iV_k^\perp$; one has $M(s_k)W_k\subset W_k$ and
$$
\bC^{2N}=\Ker(M(s_k,\th))\oplus W_k\,.
$$
In particular $M(s_k)\rstr_{W_k}$ is invertible on $W_k$. Therefore, computing the characteristic polynomial of $M(s_k)$ in this decomposition of $\bC^{2N}$, we find that
$$
\Det(\l I_{2N}-M(s_k))=\l\Det(\l I_{2N-1}-M(s_k)\rstr_{W_k})\,,
$$
so that $\l=0$ is a simple root of the characteristic polynomial of $M(s_k)$.

By the implicit function theorem, there exists a local $C^1$ function $s\mapsto\l_k(s)$ defined near $s_k$ and such that 
$$
\l_k(s_k)=0\,,\quad\hbox{ and }\Det(\l_k(s)I_{2N}-M(s))=0\hbox{ for all }s\hbox{ near }s_k\,.
$$
Moreover, there exist a $C^1$ vector field $s\mapsto\psi_k(s)\not=0$ defined near $s_k$ such that 
$$
M(s)\psi_k(s)=\l_k(s)\psi_k(s)\,,\quad\psi_k(s_k)=\phi_k\oplus D^2S^{in}(y_j(t,x))\phi_k\,.
$$
Differentiating in $s$ at $s=s_k$, one finds
$$
\dot M(s_k)\psi_k(s_k)+M(s_k)\dot\psi_k(s_k)=\dot\l_k(s_k)\psi_k(s_k)
$$
and observing that $M(s)=M(s)^T$, one concludes that
$$
\psi_k(s_k)^T\dot M(s_k)\psi_k=\dot\l_k(s_k)\psi_k(s_k)^T\psi_k(s_k)\,,
$$
so that 
$$
\dot\l_k(s_k)=\frac{\psi_k(s_k)^T\dot M(s_k)\psi_k(s_k)}{|\psi_k(s_k)|^2}\,.
$$
Notice that $\psi_k(s_k)^T\psi_k(s_k)=|\psi_k(s_k)|^2>0$ since $\psi_k(s_k)=\phi_k\oplus D^2S^{in}(y_j(t,x))\phi_k$ belongs to $\bR^{2N}\setminus\{0\}$.

Observe that
$$
\d_sD_yX_s=D_y\Xi_s\,,\qquad\hbox{ and }\d_sD_\eta X_s=D_\eta\Xi_s\,,
$$
so that
$$
\ba
\dot M(s_k)=-\left(\begin{matrix}\d_sY_{s_k}^T\\ \d_sZ_{s_k}^T\end{matrix}\right)(D_yX_{s_k}\,\,\,D_\eta X_{s_k})\rstr_{y=y_j(t,x)\atop \eta=DS^{in}(y_j(t,x))}
\\
-\left(\begin{matrix}Y_{s_k}^T\\ Z_{s_k}^T\end{matrix}\right)(D_y\Xi_{s_k}\,\,\,D_\eta\Xi_{s_k})\rstr_{y=y_j(t,x)\atop \eta=DS^{in}(y_j(t,x))}
\ea
$$
and
$$
\dot M(s_k)\psi_k(s_k)=-\left(\begin{matrix}Y_{s_k}^T\\ Z_{s_k}^T\end{matrix}\right)(D_y\Xi_{s_k}\,\,\,D_\eta\Xi_{s_k})\rstr_{y=y_j(t,x)\atop \eta=DS^{in}(y_j(t,x))}\psi_k(s_k)\,.
$$
Thus
$$
\ba
\psi_k(s_k)^T\dot M(s_k)\psi_k(s_k)=&i\psi_k(s_k)^T\left(\begin{matrix}D_yX_{s_k}^T\\ D_\eta X_{s_k}^T\end{matrix}\right)B(D_y\Xi_{s_k}\,\,\,D_\eta\Xi_{s_k})\rstr_{y=y_j(t,x)\atop \eta=DS^{in}(y_j(t,x))}\psi_k(s_k)
\\
&-\psi_k(s_k)^T\left(\begin{matrix}D_y\Xi_{s_k}^T\\ D_\eta\Xi_{s_k}^T\end{matrix}\right)(D_y\Xi_{s_k}\,\,\,D_\eta\Xi_{s_k})\rstr_{y=y_j(t,x)\atop \eta=DS^{in}(y_j(t,x))}\psi_k(s_k)\,.
\ea
$$
Since $\psi_k(s_k)=\phi_k\oplus D^2S^{in}(y_j(t,x))\phi_k$ with $DF_{s_k}(y_j(t,x))\phi_k=0$, the first term on the right hand side is
$$
\ba
i\psi_k(s_k)^T\left(\begin{matrix}D_yX_{s_k}^T\\ D_\eta X_{s_k}^T\end{matrix}\right)B(D_y\Xi_{s_k}\,\,\,D_\eta\Xi_{s_k})\rstr_{y=y_j(t,x)\atop \eta=DS^{in}(y_j(t,x))}\psi_k(s_k)
\\
=
i\phi_k^TDF_{s_k}(y_j(t,x))^TB(D_y\Xi_{s_k}\,\,\,D_\eta\Xi_{s_k})\rstr_{y=y_j(t,x)\atop \eta=DS^{in}(y_j(t,x))}\psi_k(s_k)=0\,.
\ea
$$
Hence
$$
\psi_k(s_k)^T\dot M(s_k)\psi_k(s_k)=-|(D_y\Xi_{s_k}\,\,\,D_\eta\Xi_{s_k})\psi_k(s_k)|^2\le 0\,.
$$
This last inequality can obviously not be an equality since 
$$
(D_yX_{s_k}\,\,\,D_\eta X_{s_k})\rstr_{y=y_j(t,x)\atop \eta=DS^{in}(y_j(t,x))}\psi_k(s_k)=DF_{s_k}(y_j(t,x))\phi_k=0\,,
$$
while
$$
\Det\left(\begin{matrix}D_yX_s&D_\eta X_s\\ D_y\Xi_s &D_\eta\Xi_s\end{matrix}\right)=1
$$
and $\psi_k(s_k)\not=0$. Therefore
$$
\dot\l_k(s_k)<0\,.
$$

Now, the function $[0,t]\ni s\mapsto\sqrt{\Det(M(s)/i)}\in\bC$ is continuous, and its argument has jump discontinuities for $s=s_k$ for $k=1,\ldots,n$ only. Each time $s\in[0,t]$ crosses one of the values $s_k$, the jump in the argument 
of $\sqrt{\Det(M(s)/i)}$ is exactly the jump in the argument of $\sqrt{\l_k(s)/i}$, and the previous computation shows that this jump is exactly $+\tfrac{\pi}2$. Hence 
$$
\cM_j(t,x)=n=\mu((\l(s))_{0\le s\le t})\,.
$$




\begin{thebibliography}{99}

\bibitem{Ahlfors}
L.V. Ahlfors:
``Complex Analysis'',
McGraw Hill, 2nd edition, New York 1966.

\bibitem{ArnoldMaslov}
V.I. Arnold:
\textit{Characteristic class entering in quantization condition},
Func. Anal. Appl. \textbf{1} (1967), 1--14.

\bibitem{ArnoldODE2}
V.I. Arnold:
``Geometrical methods of the theory of ordinary differential equations'',
Springer-Verlag, New York 1997.

\bibitem{ArnoldMech}
V.I. Arnold:
``Mathematical Methods of Classical Mechanics'',
Springer-Verlag, New York 1989.

\bibitem{BGMP}
C. Bardos, F. Golse, P. Markowich, T. Paul:
\textit{Hamiltonian Evolution of Monokinetic Measures with Rough Momentum Profile},
preprint arXiv:1207.5927.

\bibitem{GMMP}
P. G\'erard, P. Markowich, N. Mauser, F. Poupaud:
\textit{Homogenization limit and Wigner Transforms},
Comm. on Pure and App. Math. \textbf{50} (1997), 323--379.

\bibitem{GuillStern}
V. Guillemin, S. Sternberg:
``Geometric Asymptotics'',
Amer. Math. Soc., Providence 1977.

\bibitem{Hor1}
L. H\"ormander:
``The analysis of linear partial differential operators I'',
Springer-Verlag, Berlin, Heidelberg 1983, 1990.

\bibitem{Hor2}
L. H\"ormander:
``The analysis of linear partial differential operators II'',
Springer-Verlag, Berlin, Heidelberg 1983, 1990.

\bibitem{Hor3}
L. H\"ormander:
``The analysis of linear partial differential operators III'',
Springer-Verlag, Berlin, Heidelberg 1985, 1994.

\bibitem{Hor4}
L. H\"ormander:
``The analysis of linear partial differential operators IV'',
Springer-Verlag, Berlin, Heidelberg 1985, 1994.

\bibitem{LapSig}
A. Laptev, I. Sigal:
\textit{Global Fourier Integral Operators and Semiclassical Asymptotics},
Review of Math. Phys. \textbf{12} (2000), 749--766.

\bibitem{Leray}
J. Leray:
\textit{Lagrangian Analysis and Quantum Mechanics},
The MIT Press, Cambridge Mass., 1981.

\bibitem{LionsPaul}
P.-L. Lions, T. Paul:
\textit{Sur les mesures de Wigner},
Rev. Mat. Iberoamericana \textbf{9} (1993), 553--618. 

\bibitem{Masl}
V.P. Maslov:
\textit{Th\'eorie des perturbations et m\'ethodes asymptotiques},
Dunod, Paris, 1972.

\bibitem{MaslFed}
V.P. Maslov, M.V. Fedoryuk:
``Semiclassical approximation in quantum mechanics'',
Reidel Publishing Company, Dordrecht, 1981.

\bibitem{Milnor}
J. Milnor:
``Morse Theory'',
Princeton Univ. Press, Princeton NJ, 1963, 1969.

\bibitem{SerreMat}
D. Serre:
``Matrices'',
Springer-Verlag, 2nd edition, New York 2010.

\bibitem{Souriau}
J.-M. Souriau:
\textit{Construction explicite de l'indice de Maslov}, 117--148,
in ``Group Theoretical Methods in Physics'', A. Janner, T. Janssen and M. Boon eds.,
Lecture Notes in Phys. 50, Springer-Verlag, Berlin, Heidelberg, 1976.


\end{thebibliography}
\end{document}